\newtheorem{tw}{Theorem}[section]
\newtheorem*{twm}{Main Theorem}
\newtheorem{lem}[tw]{Lemma}
\newtheorem{prop}[tw]{Proposition}
\theoremstyle{definition}
\newtheorem{df}[tw]{Definition}
\theoremstyle{remark}
\newtheorem{rem}[tw]{Remark}
\newtheorem{ex}[tw]{Example}
\def\C{\mathbb{C}}
\def\c{\mathbb{C}}
\def\X{\mathbb{X}}
\def\x{\mathbf{X}}
\def\w{\widehat}
\def\a{\alpha}
\def\t{\widetilde}
\def\W{\mathbf{W}}
\def\T{\mathbf{T}}
\def\O{\mathcal{O}}
\def\o{\mathcal{O}_{\mathcal{S}}}
\def\P{\mathbb{P}}
\def\h{\mathbf{h}}
\def\Z{\mathbf{Z}}
\def\y{\mathbf{Y}}
\def\Y{\mathbb{Y}}
\def\I{\mathcal{T}^N_k}
\def\J{\mathcal{Y}^N_k}
\def\Az{A^{\alpha}_0}
\def\Aj{A^{\alpha}_1}
\def\Dz{D^{\alpha}_0}
\def\Dj{D^{\alpha}_1}
\def\D{\mathbb{D}}
\def\Arg{\textnormal{Arg}}
\def\merge{$\leftrightsquigarrow$}
\begin{document}
\title[A Hartogs type theorem for generalized
($\textit{N,k}$)-crosses]{A Hartogs type extension
theorem for generalized ($\textit{\textbf{N,k}}$)-crosses with pluripolar singularities}
\author[Ma{\l}gorzata Zaj\k{e}cka]{Ma{\l}gorzata Zaj\k{e}cka}
\address{\textnormal{Jagiellonian University\newline
\indent Faculty of Mathematics and Computer Science\newline
\indent Institute of Mathematics\newline
\indent {\L}ojasiewicza 6\newline
\indent 30-348 Krak\'ow}}

\address{\textnormal{Cracow University of Technology\newline
\indent Faculty of Physics, Mathematics and Computer Science\newline
\indent Institute of Mathematics\newline
\indent Warszawska 24\newline
\indent 31-155 Krak\'ow\newline
$ $}}

\email{malgorzata.zajecka@gmail.com}
\keywords{crosses, generalized crosses, separately holomorphic functions,
pluripolar sets, relative extremal function}
\subjclass[2010]{32D15, 32U15}
\begin{abstract} The aim of this paper is to present an
extension theorem for $(N,k)$-crosses with pluripolar singularities.
\end{abstract}
\maketitle

\section{Introduction. Statement of the main result}

\subsection{Introduction}
\indent The topic of separately holomorphic functions have a long history in
complex analysis. The problem was first investigated by W. F. Osgood in
\cite{Osg 1899}. Seven years later F. Hartogs in \cite{Har 1906} proved his
famous theorem stating that every separately holomorphic function is, in fact,
holomorphic. Since then the interest switched to more general problem - whether
a function $f$ defined on a product of two domains $D\times G$, being separately
holomorphic on some subsets $A\subset D$ and $B\subset G$ is holomorphic on the whole $D\times G$
(see for example papers of M. Hukuhara \cite{Huk 1942} and T. Terada \cite{Ter
1967}) - which led to the question of possible holomorphic extension of a
function separately holomorphic on the objects
called crosses.

\indent In a recent paper \cite{Lew} A.
Lewandowski introduces an object called generalized $(N,k)$-cross $\T_{N,k}$,
being a generalization of $(N,k)$-cross defined by M. Jarnicki and P. Pflug in
\cite{JarPfl 2010}, and proves an extension theorem for this new type of cross
with analytic singularities. In this paper we will prove a similar extension
theorem for $\T_{N,k}$ crosses with pluripolar singularities, being a
generalization of Theorem 10.2.9 from \cite{JarPfl 2011} and Main Theorem from
\cite{JarPfl 2003}. We will also introduce other type of generalized
$(N,k)$-crosses called $\y_{N,k}$ crosses, being more natural object to consider
in light of Theorem \ref{main}. This theorem will
turn out to be a very strong tool, allowing us to prove two Hartogs-type
extension theorems for the functions separately holomorphic on $\x_{N,k}$,
$\T_{N,k}$ and $\y_{N,k}$ crosses, including the Main Theorem of this paper.

\indent The paper is divided into four sections. In the first section we define
generalized $(N,k)$-crosses and we state the Main Theorem. Section 2
contains some useful definitions and facts. Section 3 is dedicated to
$(N,k)$-crosses - their properties and recent cross theorems. It also contains
the statement of Theorem \ref{main} and the proof of
Main Theorem. In the last section we present the detailed proof of Theorem
\ref{main}.

\subsection{Generalized (\textit{N,k})-crosses and the main result}
Let $D_j$ be a Riemann domain over $\c^{n_j}$ and let $A_j\subset D_j$ be
locally pluriregular (see Definition \ref{relex}), $j=1,\ldots ,N$, where $N\geq
2$. For $\alpha=(\alpha_1,\ldots,\alpha_N)\in\{0,1\}^N$ and $B_j\subset
D_j$, $j=1,\ldots,N$, define: $$
\mathcal{X}_{\alpha}:=\mathcal{X}_{1,\alpha_1}\times\ldots\times\mathcal{X}_{N,\alpha_N},\quad
\mathcal{X}_{j,\alpha_j}:=\left\{
\begin{matrix}
D_j&\text{ when }&\alpha_j=1\\
A_j&\text{ when }&\alpha_j=0\\
\end{matrix}\right. ,\quad j=1,\ldots,N,\\
$$
$$
B_0^\a:=\prod_{j\in\{1,\ldots,N\}:\,\a_j=0}B_j,\quad
B_1^\a:=\prod_{j\in\{1,\ldots,N\}:\,\a_j=1}B_j.
$$

For $\a\in\{0,1\}^N$ we merge $c_0\in\Dz$ and $c_1\in\Dj$ into
$(\overset{\resizebox{2,3em}{1mm}{\merge}}{c_0,c_1})\in\prod\limits_{j=1}^n
D_j$ by putting variables in right places.

We also use the following convention: for $D\subset\Dz$, $G\subset\Dj$,
$\a\in\{0,1\}^N$, define
$$
\overset{\resizebox{3em}{1mm}{\merge}}{D\times
G}:=\{(\overset{\resizebox{1,6em}{1mm}{\merge}}{a,b}):\
a\in D,\ b\in G\}. $$

To simplify the notation let us define families
$$\I:=\{\a\in\{0,1\}^N:\ |\a|=k\},\quad\J:=\{\a\in\{0,1\}^N:\ 1\leq |\a|\leq
k\}.$$

\begin{df}
For a $k\in\{1,\ldots,N\}$ we define an \emph{$(N,k)$-cross}
$$
\x_{N,k}=\mathbb{X}_{N,k}((A_j,D_j)_{j=1}^N):=\bigcup_{\a\in\I}\mathcal{X}_{\a}.
$$
\end{df}

For $\a\in\J$ let
$\Sigma_\a\subset\Az$ and put
$$\mathcal{X}_{\a}^\Sigma:=\{z\in\mathcal{X}_{\alpha}:\
z_{\a}\not\in\Sigma_{\a}\},\quad\a\in\J,$$
where $z_{\a}$
denotes the projection of $z$ on $\Dz$. 

\begin{df}
We define a
\emph{generalized $(N,k)$-cross} $\T_{N,k}$
$$\T_{N,k}=\mathbb{T}_{N,k}((A_j,D_j)_{j=1}^N,(\Sigma_\a)_{\a\in\I}):=\bigcup_{\a\in\I}\mathcal{X}_{\a}^\Sigma$$
and a \emph{generalized $(N,k)$-cross} $\y_{N,k}$
$$\y_{N,k}=\Y_{N,k}((A_j,D_j)_{j=1}^N,(\Sigma_\a)_{\a\in\J}):=\bigcup_{\a\in\J}\mathcal{X}_{\a}^\Sigma.$$
\end{df}

Observe that always $\T_{N,k}\subset\y_{N,k}$.

\begin{ex}
To see the difference between generalized $\T_{N,k}$ and $\y_{N,k}$
consider for example $N=3$, $k=2$, and let
$$\Sigma_{(1,1,0)}=\{z_3\}\subset A_3,\quad\Sigma_{(1,0,1)}=\{z_2\}\subset
A_2,\quad\Sigma_{(0,1,1)}=\{z_1\}\subset A_1,$$
$$\Sigma_\a=\varnothing,\
\a\in\mathcal{Y}^3_2\setminus\mathcal{T}^3_2 .$$

\end{ex}
\bigskip

Observe that if for all $\a\in\J$
we have $\Sigma_\a=\varnothing$, then
$$\mathbb{T}_{N,k}((A_j,D_j)_{j=1}^N,(\Sigma_\a)_{\a\in\I})=\Y_{N,k}((A_j,D_j)_{j=1}^N,(\Sigma_\a)_{\a\in\J})$$
$$=\mathbb{X}_{N,k}((A_j,D_j)_{j=1}^N).$$
Moreover, for $k=1$ we have
$(\Sigma_\a)_{\a\in\I}=(\Sigma_\a)_{\a\in\J}=(\Sigma_j)_{j=1}^N$ and we use the
simplified notation
$$\T_{N,1}=\y_{N,1}=:\mathbb{T}((A_j,D_j,\Sigma_j)_{j=1}^N).$$

\begin{df}
For $(N,k)$-cross $\W_{N,k}\in\{\x_{N,k},\T_{N,k},\y_{N,k}\}$ we define its
\emph{center} as
$$
c(\W_{N,k}):=\W_{N,k}\cap(A_1\times\ldots\times A_N).
$$
\end{df}

\begin{df}
For a cross $\x_{N,k}=\mathbb{X}_{N,k}((A_j,
D_j)_{j=1}^N)$ we define its \emph{hull}
$$
\w{\x}_{N,k}=\widehat{\mathbb{X}}_{N,k}((A_j, D_j)_{j=1}^N):=\big\{(z_1,\ldots
,z_N)\in D_1\times\ldots\times D_N:\ \sum_{j=1}^N\h_{A_j,D_j}(z_j)<k\big\}, $$
where $\h_{B,D}$ denotes relative extremal function of $B$ with respect to $D$
(see Definition \ref{relex}).
\end{df}

Let $\W_{N,k}\in\{\T_{N,k},\y_{N,k}\}$ and let $M\subset\W_{N,k}$. For an 
$\a\in\J$ and for an $a\in\Az$ let
$M_{a,\a}$ denote the fiber $$M_{a,\alpha}:=\{z\in\Dj:\
(\overset{\resizebox{1,6em}{1mm}{\merge}}{a,z})\in M\}.$$

For $(z',z'')\in\prod\limits_{j=1}^kD_j\times\prod\limits_{j=k+1}^ND_j$,
$k\in\{1,\ldots,N-1\}$, define
$$
M_{(z',\cdot)}:=\{b\in\prod\limits_{j=k+1}^ND_j: (z',b)\in M\},\
M_{(\cdot,z'')}:=\{a\in\prod\limits_{j=1}^kD_j: (a,z'')\in M\}.$$

\begin{df}\label{holT}
Let $M\subset\T_{N,k}$ be such
that for all $\a\in\I$ and for all
$a\in\Az\setminus\Sigma_{\a}$ the set
$\Dj\setminus M_{a,\alpha}$ is open.
A function
$f:\T_{N,k}\setminus M\to\c$ is called \emph{separately holomorphic on}
$\T_{N,k}\setminus M$ ($f\in\o(\T_{N,k}\setminus M)$), if for all
$\a\in\I$ and for all
$a\in\Az\setminus\Sigma_{\a}$, the function
\begin{align*}
\tag{$\dag$}\Dj\setminus
M_{a,\alpha}\ni z\mapsto
f((\overset{\resizebox{1,6em}{1mm}{\merge}}{a,z}))=:f_{a,\a}(z)
\end{align*}
is holomorphic.
\end{df}

For generalized $(N,k)$-cross $\y_{N,k}$ we state an analogical definition.

\begin{df}\label{holY}
Let $M\subset\y_{N,k}$ be such
that for all $\a\in\J$ and for all
$a\in\Az\setminus\Sigma_{\a}$ the set
$\Dj\setminus M_{a,\alpha}$ is open.
A function
$f:\y_{N,k}\setminus M\to\c$ is called \emph{separately holomorphic on}
$\y_{N,k}\setminus M$ ($f\in\o(\y_{N,k}\setminus M)$), if for all
$\a\in\I$ and for all
$a\in\Az\setminus\Sigma_{\a}$, the function
($\dag$) is holomorphic.
\end{df}

\begin{rem}\label{hol_rem}
Observe that if $f\in\o(\y_{N,k}\setminus M)$, then also for all $\a\in\J$ and
for all $a\in\Az\setminus\Sigma_{\a}$ the function ($\dag$) is
holomorphic.
\end{rem}

Let $M\subset\T_{N,k}$. For
$\a\in\J$ and for $b\in\Dj$ let
$M_{b,\a}$ denote the fiber
$$M_{b,\alpha}:=\{z\in\Az:\
(\overset{\resizebox{1,6em}{1mm}{\merge}}{z,b})\in M\}.$$

The following class of functions plays important role in the Main Theorem. It is
a natural extension of class $\o(\T_{N,k}\setminus
M)\cap\mathcal{C}(\T_{N,k}\setminus M)$.

\begin{df}
Let $M\subset\T_{N,k}$ be such
that for all $\a\in\I$ and for all
$a\in\Az\setminus\Sigma_{\a}$ the set
$\Dj\setminus M_{a,\alpha}$ is open.
By $\o^c(\T_{N,k}\setminus M)$ we denote the space of all functions
$f\in\o(\T_{N,k}\setminus M)$ such that for all
$\a\in\I$ and for all
$b\in\Dj$, the function
$$\Az\setminus (\Sigma_\a\cup M_{b,\alpha})\ni
z\mapsto
f((\overset{\resizebox{1,6em}{1mm}{\merge}}{z,b}))=:f_{b,\a}(z)$$ is
continuous.
\end{df}

The following theorem is the main result of this paper. It is an
analogue and natural generalization of Theorem 10.2.9 from \cite{JarPfl 2011}. It also
extends the main result from \cite{Lew}.

\begin{twm}[Extension theorem for $(N,k)$-crosses with
pluripolar singularities]\label{result}
Let $D_j$ be a Riemann
domain of holomorphy over $\C^{n_j}$, $A_j\subset D_j$ be locally pluriregular,
$j=1,\ldots,N$. For $\alpha\in\I$ let
$\Sigma_\a\subset\Az$ be pluripolar.
Let
$$
\x_{N,k}:=\X_{N,k}((A_j,D_j)_{j=1}^N),\quad\T_{N,k}:=\mathbb{T}_{N,k}((A_j,D_j)_{j=1}^N,(\Sigma_{\a})_{\a\in\I}).
$$
Let $M$ be a relatively closed, pluripolar subset of
$\T_{N,k}$ such that for all $\a\in\I$ and all
$a\in\Az\setminus\Sigma_\a$ the fiber $M_{a,\a}$ is
pluripolar. Let $$\mathcal{F}:=\left\{
\begin{matrix}
\o(\x_{N,k}\setminus M),&\text{ if for any }\alpha\in\I\text{ we have }
\Sigma_\a=\varnothing\\
\o^c(\T_{N,k}\setminus M),&\text{ otherwise}
\end{matrix}\right. .$$
Then there exists a relatively closed pluripolar set $\w{M}\subset\w{\x}_{N,k}$
and a generalized $(N,k)$-cross
$\T'_{N,k}:=\mathbb{T}_{N,k}((A_j,D_j)_{j=1}^N,(\Sigma'_{\a})_{\a\in\I})\subset
\T_{N,k}$ with $\Sigma_\a\subset\Sigma'_\a\subset\Az$, $\Sigma'_\a$ pluripolar,
$\a\in\I$, such that:
\begin{itemize}
  \item $\w{M}\cap (c(\T_{N,k})\cup\T'_{N,k})\subset M$,
  \item for any $f\in\mathcal{F}$ there exists a function
  $\w{f}\in\O(\w{\x}_{N,k}\setminus\w{M})$ such that $\w{f}=f$ on
  $(c(\T_{N,k})\cup\T'_{N,k})\setminus M$,
  \item $\w{M}$ is singular with respect to $\{\w{f}:\
  f\in\mathcal{F}\}$\footnotemark\footnotetext{That is, for all $a\in \w{M}$
  and $U_a$-open neighborhood of $a$ there exists an $\w{f}\in\{\w{f}:\
  f\in\mathcal{F}\}$ such that $\w{f}$ does not extend holomorphically to
  $U_a$. For more details see \cite{JarPfl 2000}, Chapter 3.}\!,
  \item if $M=\varnothing$, then $\w{M}=\varnothing$,
  \item if for all $\a\in\I$ and all
$a\in\Az\setminus\Sigma_\a$ the fiber $M_{a,\a}$ is
thin in $\Dj$, then $\w{M}$ is analytic in $\w{\x}_{N,k}$.
\end{itemize}
\end{twm}

The following remark shows that Main Theorem can be stated analogously to
Theorem 10.2.9 from \cite{JarPfl 2011}.

\begin{rem}\label{resultrem}
Observe that for any relatively closed pluripolar set $M\subset\T_{N,k}$ and for
all $\a\in\I$ there exists a pluripolar set
$\Sigma^0_\a\subset\Az$ such that $\Sigma_\a\subset\Sigma^0_\a$
and for all $a\in\Az\setminus\Sigma^0_\a$ the fiber $M_{a,\a}$
is pluripolar. Then from Main Theorem we get the conclusion with
$(\Sigma_\a)_{\a\in\I}$ and $\T_{N,k}$ substituted with
$(\Sigma^0_\a)_{\a\in\I}$ and
$\T^0_{N,k}:=\mathbb{T}_{N,k}((A_j,D_j)_{j=1}^N,(\Sigma^0_\a)_{\a\in\I}$ .
\end{rem}

\section{Preliminaries}

\subsection{Relative extremal function}

\begin{df}[Relative extremal function]\label{relex}
Let $D$ be a Riemann domain over $\C^n$ and let $A\subset D$. The
\emph{relative extremal function of $A$ with respect to $D$} is a function
$$\mathbf{h}_{A,D}:=\sup\{u\in\mathcal{PSH}(D):\ u\leq 1,\ u|_A\leq 0\}.$$
For an open set $G\subset D$ we define $\mathbf{h}_{A,G}:=\mathbf{h}_{A\cap
G,G}$.

\indent A set $A\subset D$ is called
\emph{pluriregular at a point $a\in\overline{A}$} if $\h^*_{A,U}(a)=0$ for any 
open neighborhood $U$ of the point $a$, where $\h^*_{A,U}$ denotes the upper
semicontinuous regularization of $\h_{A,U}$.

\indent We call $A$ \emph{locally
pluriregular} if $A\neq\varnothing$ and $A$ is pluriregular at every point $a\in A$.
\end{df}

\subsection{{\it N}-fold crosses}

Let $D_j$ be a Riemann domain over $\c^{n_j}$ and let $A_j\subset
D_j$ be a nonempty set, $j=1,\ldots ,N$, where $N\geq 2$. For $k=1$, for
historical reasons, we call $\X_{N,1}((A_j,D_j)_{j=1}^N)$ an \emph{N-fold cross}
$\x$ and we use the following notation
$$ \x=\mathbb{X}(A_1,\ldots,A_N;D_1,\ldots,D_N)=\mathbb{X}((A_j, D_j)_{j=1}^N)=
\bigcup_{j=1}^N(A_j'\times D_j\times A_j'') , $$
\noindent where
\begin{align*}
&A_j':=A_1\times\ldots\times A_{j-1},\ j=2,\ldots ,N,\\
&A_j'':=A_{j+1}\times\ldots\times A_{N},\ j=1,\ldots ,N-1,\\
&A_1'\times D_1\times A_1'':=D_1\times A_1'',\quad A_N'\times D_N\times
A_N'':=A_N'\times D_N.
\end{align*}

For $\Sigma_j\subset A'_j\times A''_j$, $j=1,\ldots,N$ put
$$\mathcal{X}_j:=\{(a'_j,z_j,a''_j)\in A'_j\times D_j\times A''_j:\
(a'_j,a''_j)\not\in\Sigma_j\},$$where
\begin{align*}
&a_j':=(a_1,\ldots ,a_{j-1}),\ j=2,\ldots ,N,\\
&a_j'':=(a_{j+1},\ldots , a_{N}),\ j=1,\ldots ,N-1,\\
&(a_1',z_1,a_1''):=(z_1,a_1''),\quad (a_N',z_N,a_N''):=(a_N',z_N).
\end{align*}
We call
$\mathbb{T}_{N,1}((A_j,D_j,\Sigma_j)_{j=1}^N) =\bigcup_{j=1}^N\mathcal{X}_j$ a
\emph{generalized N-fold cross} $\T$.

For $(a_j',a_j'')\in A_j'\times A_j''$, $j=1,\ldots,N$, define the fiber
$$
M_{(a_j',\cdot,a_j'')}:=\{z\in D_j:\ (a_j',z,a_j'')\in M\}.$$

Our proof of Main Theorem will be based on more technically complicated result
(i.e. Theorem \ref{main}), being an analogue of the following theorem. Moreover,
we will use this result as the first inductive step in the proof of
mentioned Theorem \ref{main}.

\begin{tw}[see \cite{JarPfl 2007}, Theorem 1.1]\label{classic}
Let $D_j$ be a Riemann domain of holomorphy over $\C^{n_j}$, $A_j\subset
D_j$ be locally pluriregular and let $\Sigma_j\subset
A'_j\times A''_j$ be pluripolar, $j=1,\ldots ,N$. Put
$$\x:=\X((A_j,D_j)_{j=1}^N),\quad\T:=\mathbb{T}((A_j,D_j,\Sigma_j)_{j=1}^N).$$
Let $\mathcal{F}\subset\{f:\ f:c(\T)\setminus M\to \C\}$ and let $M\subset\T$ be
such that:
\begin{itemize}
  \item for any $j\in\{1,\ldots,N\}$ and any $(a'_j,a''_j)\in (A'_j\times
  A''_j)\setminus\Sigma_j$ the fiber $M_{(a'_j,\cdot,a''_j)}$ is pluripolar,
  \item for any $j\in\{1,\ldots,N\}$ and any $(a'_j,a''_j)\in (A'_j\times
  A''_j)\setminus\Sigma_j$ there exists a closed pluripolar
  set $\t{M}_{a,j}\subset D_j$ such that $\t{M}_{a,j}\cap
  A_j\subset M_{(a'_j,\cdot,a''_j)}$,
  \item for any $a\in c(\T)\setminus M$ there exists an $r>0$ such
  that for all $f\in\mathcal{F}$ there exists an $f_a\in\O(\P(a,r))$ with
  $f_a=f$ on $\P(a,r)\cap(c(\T)\setminus M)$\footnotemark\footnotetext{$\mathbb{P}(a,r)$
  denotes a polydisc in Riemann domain $D_1\times\ldots\times D_N$ centered at
  $a$ with radius $r$. For more details see \cite{JarPfl 2000}, Chapter 1.},
  \item for any $f\in\mathcal{F}$, any $j\in\{1,\ldots,N\}$, 
  and any $(a'_j,a''_j)\in (A'_j\times A''_j)\setminus\Sigma_j$ there
  exists a function $\t{f}_{a,j}\in\O(D_j\setminus\t{M}_{a,j})$ such that
  $\t{f}_{a,j}=f(a'_j,\cdot,a''_j)$ on $A_j\setminus
  M_{a,j}$.
\end{itemize}
Then there exists a relatively closed pluripolar set $\w{M}\subset\w{\x}$
such that:
\begin{itemize}
  \item $\w{M}\cap c(\T)\subset M$,
  \item for any $f\in\mathcal{F}$ there exists a function
  $\w{f}\in\O(\w{\x}\setminus\w{M})$ such that $\w{f}=f$ on $c(\T)\setminus M$,
  \item $\w{M}$ is singular with respect to $\{\w{f}:\ f\in\mathcal{F}\}$,
  \item if for all $j\in\{1,\ldots,N\}$ and all $(a'_j,a''_j)\in (A'_j\times
  A''_j)\setminus\Sigma_j$ we have $\t{M}_{a,j}=\varnothing$, then $\w{M}=\varnothing$,
  \item if for all $j\in\{1,\ldots,N\}$ and all $(a'_j,a''_j)\in (A'_j\times
  A''_j)\setminus\Sigma_j$ the set $\t{M}_{a,j}$ is thin in $D_j$, then $\w{M}$
  is analytic in $\w{\x}$.
\end{itemize}

\end{tw}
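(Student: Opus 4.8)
The plan is to argue by induction on $N$, with $N=2$ as the base case and the inductive step reduced to a two-fold cross obtained by grouping the first $N-1$ coordinates together against the last one. Since each $A_j$ is locally pluriregular, $\h_{A_j,D_j}=\h^*_{A_j,D_j}\in\mathcal{PSH}(D_j)$, so the hull $\w{\x}=\{z\in D_1\times\ldots\times D_N:\ \sum_{j=1}^N\h_{A_j,D_j}(z_j)<1\}$ is a pseudoconvex Riemann domain whose sublevel structure is governed by the relative extremal functions. After exhausting each $D_j$ by relatively compact pseudoconvex subdomains I would reduce to the case in which the $D_j$ are bounded domains of holomorphy, so that the extremal functions behave well under the limiting process and the usual two-constants estimate is available.

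To construct $\w{f}$ for a fixed $f\in\mathcal{F}$, I would first use the local data near the center: the third condition furnishes for every $a\in c(\T)\setminus M$ a polydisc $\P(a,r)$ and an $f_a\in\O(\P(a,r))$ agreeing with $f$ on $\P(a,r)\cap(c(\T)\setminus M)$, and on overlaps these germs must coincide, so together they define a holomorphic function on a neighborhood of $c(\T)\setminus M$. The fourth condition furnishes, in each coordinate direction $j$ and for each admissible $(a'_j,a''_j)$, the separate extension $\t{f}_{a,j}\in\O(D_j\setminus\t{M}_{a,j})$ across the pluripolar fibre singularity $\t{M}_{a,j}$. The engine of the proof is the classical (non-singular) cross theorem of Siciak--Zahariuta type: applying it to the separately extended data and controlling the growth of the candidate extension by $\sum_j\h_{A_j,D_j}(z_j)$, I would glue everything into a single $\w{f}$ holomorphic on $\w{\x}$ off a pluripolar exceptional set $P$. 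In the inductive step this application is replaced by the induction hypothesis in the first $N-1$ variables (freezing $z_N\in A_N$) followed by a two-fold cross theorem between the $(N-1)$-fold hull and $D_N$.

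The delicate point is the singular set $\w{M}$, which I would take to be the joint singularity locus of the whole family $\{\w{f}:\ f\in\mathcal{F}\}$ inside $\w{\x}$. By construction $\w{M}$ is relatively closed and singular with respect to the family, and the inclusion $\w{M}\cap c(\T)\subset M$ follows from the third condition. The real work is to prove that $\w{M}$ is pluripolar: one cannot merely union the $a$-dependent sets $\t{M}_{a,j}$, so instead I would build a nonconstant plurisubharmonic function on $\w{\x}$ whose $-\infty$ locus contains $\w{M}$, using the pluripolarity of each $\t{M}_{a,j}$, the subextension theory for plurisubharmonic functions, and the pluripolar-graph technique to assemble the fibrewise singularities into a single negligible set; the relative extremal function then guarantees that this locus meets $\w{\x}$ in a relatively closed pluripolar set. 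The remaining two assertions drop out of the same construction: if every $\t{M}_{a,j}=\varnothing$ there is no fibre obstruction, the non-singular cross theorem gives $\w{f}\in\O(\w{\x})$ and hence $\w{M}=\varnothing$; and if each $\t{M}_{a,j}$ is only thin, the $-\infty$ locus is replaced by the common zero set of holomorphic functions, so $\w{M}$ is analytic by an argument of Chirka--Sadullaev type.

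I expect the pluripolarity and relative closedness of $\w{M}$ to be the main obstacle: passing from the $a$-dependent pluripolar fibre singularities $\t{M}_{a,j}$ to one pluripolar set in the joint hull is precisely the difficulty that the singular-set machinery for cross theorems is built to handle, and it is where the interplay between the relative extremal function and subextension of plurisubharmonic functions has to be controlled most carefully.
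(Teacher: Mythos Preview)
The paper does not prove this statement at all: Theorem~\ref{classic} is quoted verbatim from \cite{JarPfl 2007}, Theorem~1.1, and is used as a black box---it is precisely the $k=1$ base case in Step~1 of the proof of Theorem~\ref{main}. So there is no ``paper's own proof'' to compare against here.

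That said, your outline is broadly the right shape for the Jarnicki--Pflug argument in \cite{JarPfl 2007}: induction on $N$, freezing the last variable to invoke the $(N-1)$-fold case, then a two-fold cross between the $(N-1)$-fold hull and $D_N$, with the singular set $\w{M}$ taken as the joint singularity locus of the extended family. You also correctly identify the crux, namely that pluripolarity of $\w{M}$ cannot come from a naive union of the fibrewise $\t{M}_{a,j}$ and requires the machinery built in \cite{JarPfl 2007} and \cite{JarPfl 2011}, Chapter~9. Where your sketch is thinnest is exactly there: the phrase ``build a nonconstant plurisubharmonic function on $\w{\x}$ whose $-\infty$ locus contains $\w{M}$'' hides the entire technical content of the original paper (the reduction to $N=2$, the careful treatment of the exceptional $\Sigma_j$'s, and the gluing lemma ensuring the candidate extensions on overlapping pieces agree). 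If you want to reconstruct the proof rather than cite it, those are the steps that need to be written out; the present paper does not supply them.

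It may help you to look at Section~\ref{proof}: the proof of Theorem~\ref{main} there follows a structurally similar induction (on $k$ rather than $N$), and Lemma~\ref{sklejka} is exactly the kind of gluing argument you would need in your Step~2.
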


\section{({\it N,k})-crosses}

\subsection{Basic properties of ({\it N,k})-crosses}

The following properties will be implicitly used throughout the paper.

\begin{lem}[Properties of $(N,k)$-crosses, see \cite{JarPfl 2010}, Remark
5]$\quad$
\begin{enumerate}[\rm(i)]
  \item $\x_{N,1}=\X((A_j,D_j)_{j=1}^N)$,
  $\w{\x}_{N,1}=\w{\X}((A_j,D_j)_{j=1}^N)$,
  \item $\x_{N,k}$ is arcwise connected,
  \item $\w{\x}_{N,k}$ is connected,
  \item if $D_1,\ldots,D_N$ are Riemann domains of holomorphy, then
  $\w{\x}_{N,k}$ is a Riemann domain of holomorphy,
  \item $\x_{N,k}\subset\x_{N,k+1}$, $\w{\x}_{N,k}\subset\w{\x}_{N,k+1}$,
  $k=1,\ldots,N-1$,
  \item $\x_{N,k}=\X(\x_{N-1,k-1},A_N;\x_{N-1,k},D_N)$, $k=2,\ldots,N-1$, $N>2$.
\end{enumerate}
\end{lem}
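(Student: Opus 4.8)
The plan is to dispose of the six items in increasing order of difficulty: the first three are essentially bookkeeping from the definition $\x_{N,k}=\bigcup_{\a\in\I}\mathcal{X}_{\a}$, while the connectedness and holomorphic convexity of the hull carry the real content. For (i) I would observe that $\mathcal{T}^N_1$ consists exactly of the unit multi-indices $e_1,\dots,e_N$, and $\mathcal{X}_{e_j}=A_1\times\cdots\times D_j\times\cdots\times A_N=A_j'\times D_j\times A_j''$, so the union is the classical $N$-fold cross; the identity $\w{\x}_{N,1}=\w{\X}((A_j,D_j)_{j=1}^N)$ is then just the definition of the hull specialised to $k=1$. For (vi) I would split the union over $\a\in\mathcal{T}^N_k$ according to the value of $\a_N$: the indices with $\a_N=1$ contribute $\x_{N-1,k-1}\times D_N$ and those with $\a_N=0$ contribute $\x_{N-1,k}\times A_N$, which is exactly $\X(\x_{N-1,k-1},A_N;\x_{N-1,k},D_N)$. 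For (v), any $\a$ with $|\a|=k<N$ has a zero coordinate $j$; replacing $\a_j$ by $1$ produces $\beta\in\mathcal{T}^N_{k+1}$ with $\mathcal{X}_{\a}\subset\mathcal{X}_{\beta}$ (here $A_j\subset D_j$ is used), and the hull inclusion is immediate from $\{\sum_j\h_{A_j,D_j}<k\}\subset\{\sum_j\h_{A_j,D_j}<k+1\}$.

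For (ii) I would fix a base point $p\in A_1\times\cdots\times A_N$ and join every point of $\x_{N,k}$ to it. Given $z\in\mathcal{X}_{\a}$, I first drag each coordinate with $\a_j=1$ along an arc in the (arcwise connected) Riemann domain $D_j$ to $p_j\in A_j$, keeping the remaining coordinates fixed; throughout, the point stays in $\mathcal{X}_{\a}$, and at the end it lies in the centre $A_1\times\cdots\times A_N$. A centre point is then connected to $p$ one coordinate at a time: to move coordinate $j$ through $D_j$ I choose $\beta\in\I$ with $\beta_j=1$, which is possible because $k\ge1$ lets me keep the moving coordinate free while pinning the others at points of $A$, so the entire arc remains in $\mathcal{X}_{\beta}\subset\x_{N,k}$.

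Statements (iii) and (iv) are the heart of the matter, and I expect the holomorphic convexity argument to be the main obstacle. The enabling fact is that, since each $A_j$ is locally pluriregular, $\h_{A_j,D_j}$ is plurisubharmonic (it agrees with its upper regularisation), so that $u(z):=\sum_{j=1}^N\h_{A_j,D_j}(z_j)$ is plurisubharmonic on the domain of holomorphy $D:=D_1\times\cdots\times D_N$ and $\w{\x}_{N,k}=\{z\in D:u(z)<k\}$. For (iv) I would then construct a plurisubharmonic exhaustion of $\w{\x}_{N,k}$: because $t\mapsto-\log(k-t)$ is increasing and convex, $-\log(k-u)$ is plurisubharmonic on $\{u<k\}$ and blows up as $u\to k^-$, while a plurisubharmonic exhaustion of $D$ controls the part of $\partial\w{\x}_{N,k}$ coming from $\partial D$; taking the maximum of the two yields an exhaustion, so $\w{\x}_{N,k}$ is pseudoconvex and hence, by Docquier--Grauert for Riemann domains, a domain of holomorphy.

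Finally, for (iii) I would again reduce to joining an arbitrary $z\in\w{\x}_{N,k}$ to the centre, moving coordinates into $A_j$ (where $\h_{A_j,D_j}$ vanishes) while keeping $u$ below $k$. The delicate point is that one cannot simply drag a coordinate to $\h_{A_j,D_j}=1$ with the others frozen, since the available budget $k-\sum_{i\ne j}\h_{A_i,D_i}(z_i)$ may be smaller than $1$; instead one must route each coordinate $z_j$ to $A_j$ along a path on which $\h_{A_j,D_j}$ stays within budget, which rests on the connectivity of the sublevel sets of the relative extremal function relative to $A_j$. This potential-theoretic connectivity, together with the exhaustion of (iv), is precisely where the local pluriregularity of the $A_j$ is indispensable and where I would expect to spend the most care.
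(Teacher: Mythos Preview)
The paper supplies no proof of this lemma: it is simply quoted from \cite{JarPfl 2010}, Remark~5, so there is nothing to compare your argument against. Your outline is correct and follows the natural route. Items (i), (v), (vi) are the bookkeeping you describe; (ii) is fine once one notes that Riemann domains, being connected manifolds, are arcwise connected; and your pseudoconvexity argument for (iv) via $-\log(k-u)$ together with a plurisubharmonic exhaustion of $D_1\times\cdots\times D_N$ is the standard one.

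Two small points are worth tightening. First, your claim that local pluriregularity of $A_j$ gives $\h_{A_j,D_j}=\h_{A_j,D_j}^*$ is correct but merits the one-line reason: $\h_{A_j,D_j}^*$ is plurisubharmonic, bounded by $1$, and vanishes on $A_j$, hence belongs to the defining family for $\h_{A_j,D_j}$, so $\h_{A_j,D_j}^*\le\h_{A_j,D_j}\le\h_{A_j,D_j}^*$. Second, for (iii) you correctly isolate the crux, but you need slightly less than connectivity of the sublevel sets $\{\h_{A_j,D_j}<c\}$: it suffices that every connected component of such a set meets $A_j$. This follows from the usual gluing trick---if a component $U$ missed $A_j$, replacing $\h_{A_j,D_j}$ by the constant $c$ on $U$ would produce a plurisubharmonic competitor strictly larger than $\h_{A_j,D_j}$ on $U$, contradicting extremality. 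With that in hand your coordinate-by-coordinate descent to the centre goes through, and since the centre sits inside the arcwise connected $\x_{N,k}$, connectedness of $\w{\x}_{N,k}$ follows.
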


The following technical lemmas will also be useful.

\begin{lem}[\cite{JarPfl 2010}, Lemma 4]\label{inc}
Let $D_j$ be a Riemann domain of holomorphy over $\c^{n_j}$ and $A_j\subset D_j$
be locally pluriregular, $j=1,\ldots ,N$.
Then for all $z=(z_1,\ldots ,z_N)\in\w{\x}_{N,k}$ we have:
$$
\h_{\w{\x}_{N,k-1},\w{\x}_{N,k}}(z)=\max\big\{0,\sum_{j=1}^N
\h_{A_j,D_j}(z_j)-k+1\big\}. $$
\end{lem}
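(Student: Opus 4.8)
The plan is to reduce everything to the single plurisubharmonic function $\phi:=\sum_{j=1}^N\h_{A_j,D_j}$ on $D:=D_1\times\cdots\times D_N$. Writing $h_j:=\h_{A_j,D_j}$, local pluriregularity of $A_j$ gives $h_j=h_j^*\in\mathcal{PSH}(D_j)$ with $0\le h_j\le 1$, so $\phi$ is plurisubharmonic and $\phi\ge 0$. By the definition of the hull, $\w{\x}_{N,k}=\{\phi<k\}$ and $\w{\x}_{N,k-1}=\{\phi<k-1\}$, so with $B:=\{\phi<k-1\}$ and $G:=\{\phi<k\}$ the claim becomes $\h_{B,G}=\max\{0,\phi-k+1\}=:u$ on $G$. (Note $k\ge 2$ is needed here: for $k=1$ the inner set $B$ is empty and the asserted formula degenerates.)

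The lower bound is immediate. The function $u=\max\{0,\phi-(k-1)\}$ is a maximum of two plurisubharmonic functions, hence plurisubharmonic on $G$; on $G$ we have $\phi<k$, so $u\le 1$; and on $B$ we have $\phi<k-1$, so $u=0$. Thus $u$ is admissible in the supremum defining $\h_{B,G}$, whence $\h_{B,G}\ge u$.

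The core of the argument is the reverse inequality, for which I would invoke the characterization of the relative extremal function as the solution of the homogeneous complex Monge--Amp\`ere (Dirichlet) problem: $\h^*_{B,G}$ is the unique bounded plurisubharmonic function on $G$ that vanishes on $B$, attains boundary value $1$ on $\partial G$, and is maximal (i.e. $(dd^c\,\cdot\,)^n=0$, where $n=\sum_j n_j$) on $G\setminus\overline{B}$. I would verify that $u$ meets all three conditions; the only nontrivial point is maximality on the annular region $\{k-1<\phi<k\}$, where $u=\phi-k+1$ and hence $(dd^cu)^n=(dd^c\phi)^n$. Since each $h_j$ depends only on the block of variables $z_j\in\C^{n_j}$, the current $dd^c\phi=\sum_j dd^c h_j$ is block-diagonal, so in the multinomial expansion of $(\sum_j dd^ch_j)^n$ every term with more than $n_j$ factors from block $j$ vanishes; because $\sum_j n_j=n$, the only surviving term is (up to a multinomial constant) the product $\bigwedge_j(dd^ch_j)^{n_j}$. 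Each factor $(dd^ch_j)^{n_j}$ is the Monge--Amp\`ere measure of the extremal function $h_j$ and is carried by $\{h_j=0\}$, so the product is carried by $\{h_1=\cdots=h_N=0\}=\{\phi=0\}$. As $k-1\ge 1$, this set is disjoint from $\{k-1<\phi<k\}$, whence $(dd^c\phi)^n=0$ there. With maximality established, the comparison (domination) principle applied on $\{k-1<\phi<k\}$ to $u$ and $\h^*_{B,G}$, which share boundary data ($0$ on the inner boundary $\{\phi=k-1\}$ and $1$ on the outer boundary $\{\phi=k\}$), yields $\h^*_{B,G}\le u$; combined with the lower bound this forces $\h_{B,G}=\h^*_{B,G}=u$.

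The main obstacle is exactly this maximality computation together with the boundary regularity needed to run the comparison principle. The delicate facts are (i) that $\bigwedge_j(dd^ch_j)^{n_j}$ really is the product of the individual Monge--Amp\`ere masses and is carried by $\{\phi=0\}$ --- this is where Bedford--Taylor theory for locally bounded plurisubharmonic functions and the extremal identity $\int h_j\,(dd^ch_j)^{n_j}=0$ (so that $h_j=0$ almost everywhere for this measure) enter --- and (ii) that local pluriregularity of the $A_j$ propagates to pluriregularity of the sublevel sets $B$ and $G$, so that $\h_{B,G}=\h^*_{B,G}$ and the boundary values are genuinely attained, legitimizing the comparison on the annulus. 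It is precisely the hypothesis $k\ge 2$, which places $\{\phi=0\}$ strictly below the annular region, that makes the homogeneous Monge--Amp\`ere equation hold there and lets the whole argument go through.
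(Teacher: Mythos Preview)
The paper does not prove this lemma at all: it is quoted verbatim from \cite{JarPfl 2010}, Lemma~4, and no argument is given here. So there is no ``paper's own proof'' to compare your attempt against.

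That said, a few remarks on your outline. The lower bound $\h_{B,G}\ge u$ is fine and is indeed the trivial half. For the upper bound you reach for the full Bedford--Taylor machinery (Monge--Amp\`ere measures, the comparison principle, support properties of $(dd^ch_j)^{n_j}$), and you yourself flag the two genuinely delicate spots: (i) that the product $\bigwedge_j(dd^ch_j)^{n_j}$ is carried by $\{\phi=0\}$, and (ii) that the boundary values of $\h^*_{B,G}$ on the inner boundary $\{\phi=k-1\}$ are actually $0$ so that the comparison principle applies. Point (ii) is the more worrying one: you need $\h^*_{B,G}$ to vanish on $\overline{B}\cap G$, not just on the open set $B$, and on a general Riemann domain of holomorphy the ``annulus'' $\{k-1<\phi<k\}$ need not be bounded or hyperconvex, so the standard comparison/domination theorems do not apply without additional exhaustion arguments. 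These gaps are fixable, but the write-up as it stands is a sketch with acknowledged obstacles rather than a proof.

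For what it is worth, the argument in \cite{JarPfl 2010} is considerably more elementary: it does not use Monge--Amp\`ere theory at all, but instead exploits the product structure $\phi=\sum_jh_j$ directly, slicing in one block of variables at a time and using the one-variable (or one-factor) description of the relative extremal function of a sublevel set of $h_j$. That route avoids the boundary-regularity and comparison-principle issues entirely. Your approach would work in greater generality (any maximal $\phi$ in place of a sum of one-block extremal functions), which is what the extra machinery buys you, but for the specific statement at hand it is overkill.
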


\begin{lem}\label{equalh}
Let $D_j$ be a Riemann domain of holomorphy over $\c^{n_j}$ and $A_j\subset D_j$
be locally pluriregular, $j=1,\ldots ,N$. Then for $z\in\w{\x}_{N,k}$
$$
\h_{\x_{N,k-1},\w{\x}_{N,k}}(z)=\h_{\w{\x}_{N,k-1},\w{\x}_{N,k}}(z).$$
\end{lem}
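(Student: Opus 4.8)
The plan is to prove the equality $\h_{\x_{N,k-1},\w{\x}_{N,k}}=\h_{\w{\x}_{N,k-1},\w{\x}_{N,k}}$ on $\w{\x}_{N,k}$ by a two-sided inequality, the nontrivial direction being $\h_{\x_{N,k-1},\w{\x}_{N,k}}\le\h_{\w{\x}_{N,k-1},\w{\x}_{N,k}}$. First I would record the trivial inequality: since $\x_{N,k-1}\subset\w{\x}_{N,k-1}$, enlarging the set over which the extremal function vanishes can only decrease it, so $\h_{\w{\x}_{N,k-1},\w{\x}_{N,k}}\le\h_{\x_{N,k-1},\w{\x}_{N,k}}$. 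For the reverse inequality I would appeal to Lemma \ref{inc}, which gives the explicit closed form
\[
\h_{\w{\x}_{N,k-1},\w{\x}_{N,k}}(z)=\max\Big\{0,\sum_{j=1}^N\h_{A_j,D_j}(z_j)-k+1\Big\}.
\]
So it suffices to construct, for the competing family defining $\h_{\x_{N,k-1},\w{\x}_{N,k}}$, a plurisubharmonic majorant bounded by this right-hand side, i.e. to show that every $u\in\mathcal{PSH}(\w{\x}_{N,k})$ with $u\le 1$ and $u|_{\x_{N,k-1}}\le 0$ satisfies $u(z)\le\max\{0,\sum_j\h_{A_j,D_j}(z_j)-k+1\}$.

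The key idea is that $\x_{N,k-1}$, though smaller than $\w{\x}_{N,k-1}$, is ``pluriregularly dense'' enough to pin down the same extremal function. Concretely, I would use that the defining candidate function $v(z):=\max\{0,\sum_j\h_{A_j,D_j}(z_j)-k+1\}$ is plurisubharmonic on $\w{\x}_{N,k}$, is bounded by $1$ there (because $\sum_j\h_{A_j,D_j}(z_j)<k$ on the hull), and vanishes on $\x_{N,k-1}$: indeed a point of $\x_{N,k-1}=\bigcup_{|\a|=k-1}\mathcal{X}_\a$ has at least $N-(k-1)$ coordinates in the respective $A_j$, where $\h_{A_j,D_j}=0$, forcing $\sum_j\h_{A_j,D_j}(z_j)\le k-1$ and hence $v=0$. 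Thus $v$ is a competitor for $\h_{\x_{N,k-1},\w{\x}_{N,k}}$, giving $v\le\h_{\x_{N,k-1},\w{\x}_{N,k}}$, and combined with Lemma \ref{inc} this yields $\h_{\w{\x}_{N,k-1},\w{\x}_{N,k}}=v\le\h_{\x_{N,k-1},\w{\x}_{N,k}}$. But this only reproves the trivial direction; the real content is the opposite bound.

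For the genuine inequality I would exploit local pluriregularity of the $A_j$ together with the product/cross structure. The strategy is: take any competitor $u$ for $\h_{\x_{N,k-1},\w{\x}_{N,k}}$, so $u\le 1$ and $u\le 0$ on $\x_{N,k-1}$; I want $u\le 0$ on $\w{\x}_{N,k-1}$ (after which $u$ is automatically a competitor for $\h_{\w{\x}_{N,k-1},\w{\x}_{N,k}}$ and the bound follows). The point is that $\w{\x}_{N,k-1}$ is the interior-of-the-hull of the cross $\x_{N,k-1}$, and on such hulls any plurisubharmonic function that is $\le 0$ on the cross itself is $\le 0$ on the whole hull, because the hull is precisely the region into which separately-pluriregular data forces extension; this is the standard cross-theorem mechanism for relative extremal functions. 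Slicing along the $N$-th variable and using the recursive description $\x_{N,k}=\X(\x_{N-1,k-1},A_N;\x_{N-1,k},D_N)$ from the Properties lemma, I would set up an induction on $N$ (with base case reducing to the classical one-variable statement $\h_{A,D}=\h^*_{A,D}$ for locally pluriregular $A$, i.e. $A$ is non-plurithin so the cross and its closure give the same extremal function). The main obstacle is controlling the upper-semicontinuous regularization: establishing that passing from $\x_{N,k-1}$ to its hull does not create a negligible-but-nonzero set where $u>0$ requires that $\x_{N,k-1}$ be non-pluripolar in the right slices, which is exactly where local pluriregularity of each $A_j$ and the fact that $\h^*_{A_j,D_j}=\h_{A_j,D_j}$ vanishes on $A_j$ must be invoked carefully, slice by slice, to rule out exceptional pluripolar sets.
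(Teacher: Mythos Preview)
Your overall structure matches the paper's: the trivial inequality $\h_{\w{\x}_{N,k-1},\w{\x}_{N,k}}\le\h_{\x_{N,k-1},\w{\x}_{N,k}}$ by monotonicity, and for the reverse, the reduction to showing that every competitor $u$ for $\h_{\x_{N,k-1},\w{\x}_{N,k}}$ satisfies $u\le 0$ on all of $\w{\x}_{N,k-1}$. Your middle paragraph about the explicit function $v$ is, as you yourself note, a detour that only reproves the easy direction and can be dropped.

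Where you diverge from the paper is in the justification of the key step. The paper does not set up a slice-by-slice induction on $N$. Instead it simply restricts $u$ to the open subset $\w{\x}_{N,k-1}\subset\w{\x}_{N,k}$, so that $u|_{\w{\x}_{N,k-1}}\in\mathcal{PSH}(\w{\x}_{N,k-1})$, $u\le 1$, $u|_{\x_{N,k-1}}\le 0$, and hence $u|_{\w{\x}_{N,k-1}}\le\h_{\x_{N,k-1},\w{\x}_{N,k-1}}$. It then invokes the identity $\h_{\x_{N,k-1},\w{\x}_{N,k-1}}\equiv 0$ on $\w{\x}_{N,k-1}$, which is the $(N,k)$-cross analogue of Proposition~5.1.8~(i) in \cite{JarPfl 2011}: the proof there carries over verbatim once one replaces the classical $N$-fold cross theorem by Theorem~\ref{cross}. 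This packages exactly the ``pluriregular density of the cross in its hull'' that you are trying to establish by hand, and it sidesteps the inductive bookkeeping and the regularization worries you flag at the end. Incidentally, your proposed base case ``$\h_{A,D}=\h^*_{A,D}$ for locally pluriregular $A$'' is not quite the right formulation; what local pluriregularity gives is $\h^*_{A,D}=0$ on $A$, and what is actually needed at the bottom of the argument is the vanishing $\h_{\x,\w{\x}}\equiv 0$ on the hull, which is a consequence of the cross theorem rather than a pointwise regularity statement.
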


\begin{proof}
The inequality "$\geq$" follows from properties of relative extremal function
(see \cite{JarPfl 2011}, Proposition 3.2.2). To show the opposite inequality fix
a $u\in\mathcal{PSH}(\w{\x}_{N,k})$ such that $u\leq 1$ and $u|_{\x_{N,k-1}}=0$.
Then $u|_{\w{\x}_{N,k-1}}\in\mathcal{PSH}(\w{\x}_{N,k-1})$ and
$u|_{\w{\x}_{N,k-1}}\leq\h_{\x_{N,k-1},\w{\x}_{N,k-1}}$. Using
analogous\footnote{Instead of classical cross theorem
for $N$-fold crosses we use cross theorem for $(N,k)$-crosses - see Theorem
\ref{cross}.} reasoning as in Proposition 5.1.8 (i) from \cite{JarPfl 2011} we
show that $\h_{\x_{N,k-1},\w{\x}_{N,k-1}}\equiv 0$ on $\w{\x}_{N,k-1}$, what finishes the proof.
\end{proof}

\subsection{Cross theorems for ({\it N,k})-crosses}

In this section we present the latest results considering
$(N,k)$-crosses which will be used in the proof of the Main Theorem. Observe
that our main result generalizes both of them.

\begin{tw}[Cross theorem for $(N,k)$-crosses, cf. \cite{JarPfl 2011}, Theorem
7.2.7]\label{cross}
Let $D_j$ be a Riemann domain of holomorphy over $\c^{n_j}$ and $A_j\subset D_j$
be locally pluriregular, $j=1,\ldots,N$. For $k\in\{1,\ldots,N\}$ let
$\x_{N,k}:=\mathbb{X}_{N,k}((A_j, D_j)_{j=1}^N)$. Then for every
$f\in\o(\x_{N,k})$ there exists a unique function $\w{f}\in\O(\w{\x}_{N,k})$
such that $\w{f}=f$ on $\x_{N,k}$.
\end{tw}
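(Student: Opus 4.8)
\emph{Uniqueness} is immediate from the identity principle. Recall that $\w{\x}_{N,k}$ is connected (item (iii) in the list of basic properties of $(N,k)$-crosses), and that $\h_{A_j,D_j}=0$ on $A_j$ because $A_j$ is locally pluriregular. Hence, if two extensions agree on $\x_{N,k}$, then fixing the $A_j$-coordinates of a slice $\mathcal{X}_\a$ ($|\a|=k$) at points of $A_j$ makes their difference a function on a connected open fiber that vanishes on the nonempty open set $\{\sum_j\h_{A_j,D_j}(z_j)<k\}$; the identity principle, together with non-pluripolarity of the $A_j$ and connectedness, forces the two extensions to coincide. So only existence requires proof, and I attack it by induction on $N$.

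The base cases are $k=1$, which is the classical $N$-fold cross theorem (item (i) identifies $\x_{N,1}$ and $\w{\x}_{N,1}$ with the $N$-fold cross and its hull), and $k=N$, where $\x_{N,N}=D_1\times\cdots\times D_N=\w{\x}_{N,N}$ and there is nothing to prove. For $2\le k\le N-1$ (so $N>2$) I use the decomposition from item (vi), namely $\x_{N,k}=\X(\x_{N-1,k-1},A_N;\x_{N-1,k},D_N)=(\x_{N-1,k-1}\times D_N)\cup(\x_{N-1,k}\times A_N)$. Given $f\in\o(\x_{N,k})$, for each fixed $a_N\in A_N$ the slice $f(\cdot,a_N)$ lies in $\o(\x_{N-1,k})$, so the induction hypothesis (the $(N-1,k)$-cross theorem, legitimate since $k\le N-1$) extends it to some $g_{a_N}\in\O(\w{\x}_{N-1,k})$; and for each fixed $w'\in\x_{N-1,k-1}$ one has $f(w',\cdot)\in\O(D_N)$, because $\{w'\}\times D_N\subset\x_{N,k}$. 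Setting $F:=f$ on $\x_{N-1,k-1}\times D_N$ and $F(w',a_N):=g_{a_N}(w')$ on $\w{\x}_{N-1,k}\times A_N$ (these agree on the overlap, since $\x_{N-1,k-1}\subset\x_{N-1,k}$) produces a function separately holomorphic on the two-fold cross $\X(\x_{N-1,k-1},A_N;\w{\x}_{N-1,k},D_N)$.

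I now invoke the classical two-fold cross theorem (see \cite{JarPfl 2011}). Its hypotheses hold: $\w{\x}_{N-1,k}$ is a Riemann domain of holomorphy by item (iv), $D_N$ is one by assumption, $A_N$ is locally pluriregular in $D_N$, and $\x_{N-1,k-1}$ is locally pluriregular in $\w{\x}_{N-1,k}$ (near each of its points it is a product of the locally pluriregular $A_j$ with the open $D_j$). Thus $F$ extends holomorphically to the hull
\[
H_N:=\w{\X}(\x_{N-1,k-1},A_N;\w{\x}_{N-1,k},D_N)=\big\{(z',z_N):\h_{\x_{N-1,k-1},\w{\x}_{N-1,k}}(z')+\h_{A_N,D_N}(z_N)<1\big\}.
\]
By Lemma \ref{equalh} and Lemma \ref{inc} applied with $N-1$ in place of $N$, one has $\h_{\x_{N-1,k-1},\w{\x}_{N-1,k}}(z')=\max\{0,\sum_{j=1}^{N-1}\h_{A_j,D_j}(z_j)-k+1\}$, and a short computation then identifies $H_N=\{z\in\w{\x}_{N,k}:\h_{A_N,D_N}(z_N)<1\}$. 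Call the resulting extension $\w{f}^{(N)}\in\O(H_N)$; it equals $f$ on $\x_{N,k}$.

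By symmetry (relabelling the variables) I obtain, for every $j\in\{1,\ldots,N\}$, an extension $\w{f}^{(j)}\in\O(H_j)$ with $H_j=\{z\in\w{\x}_{N,k}:\h_{A_j,D_j}(z_j)<1\}$ and $\w{f}^{(j)}=f$ on $\x_{N,k}$. Since a point of $\w{\x}_{N,k}$ satisfies $\sum_j\h_{A_j,D_j}(z_j)<k\le N$, it cannot have $\h_{A_j,D_j}(z_j)=1$ for every $j$; hence $\bigcup_{j=1}^N H_j=\w{\x}_{N,k}$. On overlaps the $\w{f}^{(j)}$ agree, as they all equal $f$ on $\x_{N,k}$ and $\w{\x}_{N,k}$ is connected, so the identity principle forces coincidence; gluing yields the desired $\w{f}\in\O(\w{\x}_{N,k})$. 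I expect the main obstacle to be precisely the hull bookkeeping of this last step: a single peeling direction reaches only $H_N\subsetneq\w{\x}_{N,k}$ (it misses the locus $\h_{A_N,D_N}=1$), so the crux is to use Lemmas \ref{inc} and \ref{equalh} to show that the symmetric family $\{H_j\}_{j=1}^N$ covers $\w{\x}_{N,k}$ and that the pieces are consistent.
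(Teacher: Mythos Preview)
The paper does not give its own proof of this theorem; it is quoted from \cite{JarPfl 2011}. Your inductive argument is correct and follows exactly the skeleton the paper uses when proving the more general Theorem~\ref{main}: reduce $(N,k)$ to a $2$-fold cross via the decomposition (vi), apply the inductive hypothesis in the first factor, and then invoke the classical $2$-fold cross theorem on $\X(\x_{N-1,k-1},A_N;\w{\x}_{N-1,k},D_N)$.

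One simplification is worth noting: your final symmetrization and gluing over $j=1,\ldots,N$ is unnecessary. You correctly compute $H_N=\{z\in\w{\x}_{N,k}:\h_{A_N,D_N}(z_N)<1\}$, but in fact $\h_{A_N,D_N}<1$ \emph{everywhere} on $D_N$. Indeed, $\h^*_{A_N,D_N}$ is plurisubharmonic on the connected Riemann domain $D_N$, bounded above by~$1$, and vanishes on the locally pluriregular set $A_N$; if it attained the value $1$ at some point it would be identically $1$ by the maximum principle, a contradiction. Since $\h_{A_N,D_N}\le\h^*_{A_N,D_N}$, this gives $H_N=\w{\x}_{N,k}$ already, and a single peeling direction suffices. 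This is precisely the computation the paper performs at the end of Step~4 in the proof of Theorem~\ref{main}, where it establishes the two-sided inclusion $\w{\Z}=\w{\x}_{N,k}$ for the cross $\Z=\X(\x_{N-1,k-1},A_N;\w{\x}_{N-1,k},D_N)$. Your covering argument is not wrong, just redundant.
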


The following result is a special case of Theorem 2.12 from
\cite{Lew} being a cross theorem without singularities
for generalized $(N,k)$-crosses.

\begin{tw}[Cross theorem for generalized $(N,k)$-crosses]\label{arek}
Let $D_j$ be a Riemann domain over $\c^{n_j}$, $A_j\subset D_j$ be
pluriregular, $j=1,\ldots,N$. For $\alpha\in\I$ let $\Sigma_\alpha$ be a
subset of $\Az$. Then for every $f\in\o^c(\T_{N,k})$ there exists
an $\w{f}\in\O(\w{\x}_{N,k})$ such that $\w{f}=f$ on $\T_{N,k}$.
\end{tw}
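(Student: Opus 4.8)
The plan is to reduce the generalized cross theorem to the singularity-free cross theorem for ordinary $(N,k)$-crosses (Theorem~\ref{cross}): the idea is to use the continuity built into $\o^c$ to ``fill in'' the removed sets $\Sigma_\a$, producing from $f$ a separately holomorphic function $\hat f\in\o(\x_{N,k})$ with $\hat f=f$ on $\T_{N,k}$, and then to apply Theorem~\ref{cross} to extend $\hat f$ to $\w f\in\O(\w{\x}_{N,k})$; since $\T_{N,k}\subset\x_{N,k}\subset\w{\x}_{N,k}$, the resulting $\w f$ restricts to $f$ on $\T_{N,k}$, as required. The filling-in is carried out by induction on $N\geq 2$ (simultaneously for all $k\in\{1,\dots,N\}$), the base case $k=1$ being the generalized $N$-fold cross $\T_{N,1}=\mathbb{T}((A_j,D_j,\Sigma_j)_{j=1}^N)$, where separate holomorphy in the $D_j$-directions together with the continuity of the slices is exactly the input that lets the classical continuous cross theorem (the $M=\varnothing$ instance of the circle of results around Theorem~\ref{classic}) cross the $\Sigma_j$ and extend $f$ separately holomorphically to $\x=\X((A_j,D_j)_{j=1}^N)$.

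For the inductive step I would exploit the decomposition $\x_{N,k}=\X(\x_{N-1,k-1},A_N;\x_{N-1,k},D_N)$, which realizes $\x_{N,k}$ as an ordinary $2$-fold cross whose $A$-sides are $\x_{N-1,k-1}$ and $A_N$ and whose $D$-sides are $\x_{N-1,k}$ and $D_N$. Splitting $\T_{N,k}=\bigcup_{\a\in\I}\mathcal{X}_\a^\Sigma$ according to the value of $\a_N$ separates the part sitting over $D_N$ (coming from $\a$ with $\a_N=1$, governed by the families $(\Sigma_\a)$ that involve only the first $N-1$ variables) from the part sitting over $A_N$ (coming from $\a$ with $\a_N=0$). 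For $a_N\in A_N$ fixed, the slice $z'\mapsto f(z',a_N)$ inherits separate holomorphy in the $D_1,\dots,D_{N-1}$-directions and, as a restriction of a continuous slice of $f$, the continuity required to place it in $\o^c(\T_{N-1,k})$ on the first $N-1$ factors; the inductive hypothesis then extends it across the corresponding $\Sigma$'s to $\w{\x}_{N-1,k}$. Dually, for $z'$ ranging over the (extended) center the slice $z_N\mapsto f(z',z_N)$ is holomorphic on $D_N$. Feeding these two families of slice extensions into a $2$-fold cross theorem for $\X(\w{\x}_{N-1,k-1},A_N;\w{\x}_{N-1,k},D_N)$, and using Lemma~\ref{inc} together with Lemma~\ref{equalh} to identify the hull of this $2$-fold cross with $\w{\x}_{N,k}$, delivers the extension.

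The main obstacle is the crossing of the sets $\Sigma_\a$. Since $f$ is holomorphic only in the $D_j$-directions and merely continuous in the $A_j$-directions, one cannot appeal to the removability of $\Sigma_\a$ for holomorphic functions; the entire burden of the filling-in falls on the continuity encoded in $\o^c$, which is exactly why that hypothesis is imposed. The delicate points are therefore: (a) verifying that the continuity of $f$ genuinely descends to the slices, so that the inductive hypothesis applies at level $N-1$ and the reassembled slice-extensions match $f$ on $\mathcal{X}_\a^\Sigma$; and (b) checking that the partial extensions produced from different $\a\in\I$ agree on their overlaps inside the center $c(\T_{N,k})$, so that $\hat f$ is single-valued and separately holomorphic on all of $\x_{N,k}$. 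I expect (b), together with the base $2$-fold case in which $\Sigma$ is present in both factors simultaneously, to be the analytic heart of the argument: this is where the continuity of the slices, the local pluriregularity of the $A_j$, and the good behaviour of the relative extremal functions $\h_{A_j,D_j}$ on $\w{\x}_{N,k}$ must be combined, and it is the step that genuinely distinguishes the generalized cross theorem from its classical counterpart.
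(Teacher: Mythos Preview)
The paper does not prove this statement: it is quoted as a special case of Theorem~2.12 of \cite{Lew} (Lewandowski), and no argument is given in the present paper. So there is no ``paper's own proof'' to compare against; the result is used here as a black box, in particular inside the Lemma in the proof that Proposition~\ref{col} implies the Main Theorem.

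Your outline is, at the strategic level, exactly the standard route for results of this type and is very likely close in spirit to Lewandowski's proof: induction via the $2$-fold decomposition $\x_{N,k}=\X(\x_{N-1,k-1},A_N;\x_{N-1,k},D_N)$, applying the inductive hypothesis on slices $f(\cdot,a_N)$, and identifying the hull of the auxiliary $2$-fold cross with $\w\x_{N,k}$ using Lemmas~\ref{inc} and~\ref{equalh}. You also correctly isolate the two genuine issues, namely the compatibility of the slice-extensions on overlaps and the fact that crossing the $\Sigma_\a$ must rest entirely on the continuity condition in $\o^c$ (since $\Sigma_\a$ is an \emph{arbitrary} subset of $\Az$, not assumed pluripolar).

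One caution on your base case. You invoke ``the $M=\varnothing$ instance of the circle of results around Theorem~\ref{classic}'' to handle the generalized $N$-fold cross $\T_{N,1}$. But Theorem~\ref{classic} as stated requires each $\Sigma_j$ to be pluripolar, whereas here no such hypothesis is available; so Theorem~\ref{classic} does not literally apply, and you are really appealing to a separate ``continuous'' cross theorem (with arbitrary $\Sigma_j$ and the $\o^c$-type continuity of the $A$-slices) that is not stated in this paper. That result exists in the literature and is implicit in \cite{Lew}, but in a self-contained write-up you would need to state and prove it (or at least cite it precisely), because it is exactly the step that distinguishes Theorem~\ref{arek} from Theorem~\ref{cross}. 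Apart from this, your sketch is a faithful roadmap; the remaining work is the bookkeeping you already flag in points~(a) and~(b).
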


\subsection{Extension theorem for generalized ({\it N,k})-crosses with
pluripolar singularities}

Now we state an already mentioned main technical result, being an analogue of
Theorem \ref{classic} which is crucial for the proof of the Main Theorem. Its
proof will be presented in Section \ref{proof}.

\begin{tw}\label{main}
Let $D_j$ be a Riemann domain of holomorphy over $\C^{n_j}$, $A_j\subset
D_j$ be locally pluriregular, $j=1,\ldots ,N$.
For $\alpha\in\J$ let $\Sigma_\alpha$ be a
pluripolar subset of $\Az$. Let $\W_{N,k}\in\{\x_{N,k},\T_{N,k},\y_{N,k}\}$,
$M\subset c(\W_{N,k})$ and $\mathcal{F}\subset\{f:\ f:c(\W_{N,k})\setminus
M\to\C\}$ be such that: \begin{enumerate}[\rm (\text{T}1)]
  \item\label{A1} $M$ is pluripolar,\footnotemark\footnotetext{Actually we can
  assume less: $M$ is such that for all $j\in\{1,\ldots,N\}$ the set $\{a_j\in
  A_j:\ M_{(\cdot,a_j,\cdot)}\text{ is not pluripolar}\}$ is pluripolar. }
  \item\label{A2} for any $\alpha\in\J$ and any
  $a\in\Az\setminus\Sigma_\a$ the fiber $M_{a,\a}$ is pluripolar,
  \item\label{A3} for any $\alpha\in\J$ and any
  $a\in\Az\setminus\Sigma_\a$
  there exists a closed
  pluripolar set $\t{M}_{a,\a}\subset\Dj$ such that
  $\t{M}_{a,\a}\cap\Aj\subset
  M_{a,\a}$,\footnotemark\footnotetext{When $k=N$ we assume that there exists
  an $\t{M}\subset D_1\times\ldots\times D_N$ closed pluripolar such that
  $\t{M}\cap c(\W_{N,k})\subset M$.}
  \item\label{A4} for any $a\in c(\W_{N,k})\setminus M$ there exists an $r>0$
  such that for all $f\in\mathcal{F}$ there exists an $f_a\in\O(\P(a,r))$
  with $f_a=f$ on $\P(a,r)\cap(c(\W_{N,k})\setminus M)$,
  \item\label{A5} for any $f\in\mathcal{F}$, any $\alpha\in\J$, and any
  $a\in\Az\setminus\Sigma_\a$ there exists
  an
  $\t{f}_{a,\a}\in\O(\Dj\setminus\t{M}_{a,\a})$ such that
  $\t{f}_{a,\a}=f_{a,\a}$ on $\Aj\setminus
  M_{a,\a}$\footnotemark\footnotetext{When $k=N$ we assume that there exists
  an $\t{f}\in\O(D_1\times\ldots\times D_N\setminus\t{M})$ such that $\t{f}=f$
  on $c(\W_{N,k})\setminus M$.}.
\end{enumerate}
Then there exists a relatively closed pluripolar set
$\w{M}\subset\w{\x}_{N,k}$ such that:
\begin{itemize}
  \item $\w{M}\cap c(\W_{N,k})\subset M$,
  \item for any $f\in\mathcal{F}$ there exists
  an $\w{f}\in\O(\w{\x}_{N,k}\setminus\w{M})$ such that $\w{f}=f$ on
  $c(\W_{N,k})\setminus M$,
  \item $\w{M}$ is singular with respect to $\{\w{f}:\
  f\in\mathcal{F}\}$,
  \item if for all $\alpha\in\J$ and all
  $a\in\Az\setminus\Sigma_\a$
  we have $\t{M}_{a,\a}=\varnothing$, then $\w{M}=\varnothing$,
  \item if for all $\alpha\in\J$ and all
  $a\in\Az\setminus\Sigma_\a$ the set $\t{M}_{a,\a}$ is
  thin in $\Dj$, then $\w{M}$ is analytic in
  $\w{\x}_{N,k}$.
\end{itemize}
\end{tw}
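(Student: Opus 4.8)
The plan is to prove the theorem by induction on $N$, reducing each inductive step to the two-variable situation settled by Theorem \ref{classic}. The case $k=1$ (for every $N$) is exactly Theorem \ref{classic}, upon identifying the fibers $M_{a,\a}$, $\t{M}_{a,\a}$ of the present statement with the fibers $M_{(a_j',\cdot,a_j'')}$, $\t{M}_{a,j}$ there. The extremal case $k=N$ is immediate: here $\w{\x}_{N,N}\subset D_1\times\ldots\times D_N$, and the footnote forms of (T3) and (T5) already provide a closed pluripolar $\t{M}\subset D_1\times\ldots\times D_N$ and an $\t{f}\in\O(D_1\times\ldots\times D_N\setminus\t{M})$ with $\t{f}=f$ on $c(\W_{N,N})\setminus M$; one then takes $\w{M}:=\t{M}\cap\w{\x}_{N,N}$ and $\w{f}:=\t{f}|_{\w{\x}_{N,N}\setminus\w{M}}$. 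Together $k=1$ and $k=N$ dispose of $N=2$ and furnish the base of the induction. Throughout, the three admissible shapes of $\W_{N,k}$ are handled in parallel: since every $\Sigma_\a$ is pluripolar, the centers $c(\x_{N,k})=A_1\times\ldots\times A_N$, $c(\T_{N,k})$ and $c(\y_{N,k})$ differ only by a pluripolar subset of $A_1\times\ldots\times A_N$, while hypotheses (T1)--(T5) are stated uniformly over $\a\in\J$ and $a\in\Az\setminus\Sigma_\a$.

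For the inductive step fix $2\le k\le N-1$ and assume the theorem at level $N-1$. The mechanism is the decomposition $\x_{N,k}=\X(\x_{N-1,k-1},A_N;\x_{N-1,k},D_N)$, which I would read as presenting $\w{\x}_{N,k}$ as the hull of the two-fold cross
$$
\mathcal{Z}:=\X(\x_{N-1,k-1},A_N;\w{\x}_{N-1,k},D_N).
$$
That the hull of $\mathcal{Z}$ is exactly $\w{\x}_{N,k}$ is the content of Lemmas \ref{inc} and \ref{equalh}: the former gives $\h_{\w{\x}_{N-1,k-1},\w{\x}_{N-1,k}}=\max\{0,\sum_{j=1}^{N-1}\h_{A_j,D_j}-k+1\}$, the latter lets $\x_{N-1,k-1}$ serve as a locally pluriregular base in the domain of holomorphy $\w{\x}_{N-1,k}$ with the same relative extremal function, and adding $\h_{A_N,D_N}$ reproduces $\{\sum_{j=1}^N\h_{A_j,D_j}<k\}$. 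I would then assemble the data required to apply Theorem \ref{classic} in the two variables $z'\in\w{\x}_{N-1,k}$ and $z_N\in D_N$. In the $z_N$-direction, over base points $z'\in\x_{N-1,k-1}$, the closed pluripolar set and the fiber extension are supplied verbatim by (T3) and (T5) for the multi-indices with $\a_N=1$. In the $z'$-direction, over each center value $z_N\in A_N$, the slice of $\W_{N,k}$ is a generalized $(N-1,k)$-cross of the same family, and the inductive hypothesis applied to it yields a relatively closed pluripolar $\w{M}^{(z_N)}\subset\w{\x}_{N-1,k}$ together with an extension $\w{f}^{(z_N)}\in\O(\w{\x}_{N-1,k}\setminus\w{M}^{(z_N)})$ equal to $f(\cdot,z_N)$ on the center; these play the role of the closed pluripolar set and the fiber extension in the first variable, the underlying singularity-free propagation resting on the cross theorems \ref{cross} and \ref{arek}.

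With this data in place, Theorem \ref{classic} applied to $\mathcal{Z}$ produces a relatively closed pluripolar set $\w{M}\subset\w{\x}_{N,k}$ and, for each $f\in\mathcal{F}$, a function $\w{f}\in\O(\w{\x}_{N,k}\setminus\w{M})$ equal to $f$ on $c(\W_{N,k})\setminus M$. The inclusion $\w{M}\cap c(\W_{N,k})\subset M$ and the singularity of $\w{M}$ with respect to $\{\w{f}\}$ are inherited from the conclusion of Theorem \ref{classic}, while the uniqueness in Theorem \ref{cross} ensures that $\w{f}$ and $\w{M}$ do not depend on the bookkeeping choices nor on the selected shape of $\W_{N,k}$. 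If $\t{M}_{a,\a}=\varnothing$ for all $a,\a$, the inductive hypothesis forces each $\w{M}^{(z_N)}=\varnothing$, whence $\w{M}=\varnothing$ by the corresponding clause of Theorem \ref{classic}; likewise, if every $\t{M}_{a,\a}$ is thin, each $\w{M}^{(z_N)}$ is analytic by induction and $\w{M}$ is then analytic in $\w{\x}_{N,k}$ via the Grauert--Remmert-type clause of Theorem \ref{classic}.

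The step I expect to be the main obstacle is showing that the partial extensions assemble into a function that is separately holomorphic on $\mathcal{Z}$: concretely, that for a base point $z'\in\x_{N-1,k-1}$ lying off the genuine center $A_1\times\ldots\times A_{N-1}$, the map $z_N\mapsto\w{f}^{(z_N)}(z')$ still extends holomorphically over $D_N$, so that the $z_N$-direction hypotheses of Theorem \ref{classic} are met. This forces one to propagate the fiberwise extensions of (T5) to all of $\x_{N-1,k-1}$ and to check, via the uniqueness of holomorphic continuation off pluripolar sets, that the $z'$- and $z_N$-extensions agree on overlaps and are independent of the parameters and of $f$; one must simultaneously verify that the relevant fibers of $M$ stay pluripolar off a pluripolar set of parameters (as permitted by the footnote to (T1)) and keep the exceptional pluripolar sets compatible with Lemma \ref{equalh}, so that $\x_{N-1,k-1}$ is a bona fide locally pluriregular base in $\w{\x}_{N-1,k}$ and the two hulls coincide. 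Once separate holomorphy on $\mathcal{Z}$ and the fiber conditions are secured, Theorem \ref{classic} performs the actual analytic construction of $\w{M}$, and the remaining assertions follow from the cited results.
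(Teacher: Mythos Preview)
Your overall architecture matches the paper's: reduce to $\W_{N,k}=\x_{N,k}$, handle $k=1$ by Theorem~\ref{classic} and $k=N$ trivially, then in the inductive step aim at the two-fold cross $\mathcal{Z}=\X(\x_{N-1,k-1},A_N;\w{\x}_{N-1,k},D_N)$ and invoke Lemmas~\ref{inc} and~\ref{equalh} to identify $\w{\mathcal{Z}}=\w{\x}_{N,k}$. The $z'$-direction data over $z_N\in A_N$ via the $(N-1,k)$ inductive hypothesis is exactly what the paper does in its Step~3 (for $s=N$).

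The gap is precisely the ``main obstacle'' you flag but do not resolve: supplying, for \emph{every} $z'\in\x_{N-1,k-1}$ (not merely $z'\in A_1\times\ldots\times A_{N-1}$), a closed pluripolar $\t M_{z'}\subset D_N$ and a holomorphic extension of $z_N\mapsto f(z',z_N)$ to $D_N\setminus\t M_{z'}$. Hypotheses (T3)/(T5) only furnish this for center points, and ``propagate the fiberwise extensions of (T5) to all of $\x_{N-1,k-1}$'' is not yet a mechanism. The paper's device is to run the entire Step~3 construction \emph{for every coordinate} $s=1,\ldots,N$, not just $s=N$: one obtains extensions $\w f_s\in\O(\w\Z_s\setminus\w M_s)$ on the auxiliary crosses $\Z_s=\X(c(\x_{N-1,k}^{(s)}),A_s;\w\x_{N-1,k}^{(s)},D_s)$, proves they agree on overlaps (Lemma~\ref{sklejka}) so that a single $F_f$ is well defined, and then observes that for any $z'\in\x_{N-1,k-1}$ there is some $s\in\{1,\ldots,N-1\}$ with $\{z'\}\times D_N\subset\x_{N-1,k}^{(s)}\times A_s\subset\Z_s$; the required $z_N$-extension is then $\w f_s(z',\cdot)$ and the required singular set is $(\w M_s)_{(z',\cdot)}$. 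Without this ``all $s$'' step and the gluing lemma, the hypotheses of Theorem~\ref{classic} on $\mathcal{Z}$ are not verified, and the argument does not close.
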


Theorem \ref{main} has one immediate and useful consequence, which might be
called main extension theorem for generalized $(N,k)$-crosses with pluripolar
singularities (see analogical theorem for $N$-fold crosses, i.e. Theorem 10.2.6
from \cite{JarPfl 2011}).

\begin{prop}\label{col}
Let $D_j$, $A_j$ and $\Sigma_\a$ be as in Theorem
\ref{main}. Let $$\W_{N,k}\in\{\x_{N,k},\T_{N,k},\y_{N,k}\}.$$ Let
$M\subset\W_{N,k}$ and $\mathcal{F}\subset\o(\W_{N,k}\setminus M)$ be such
that: \begin{enumerate}[\rm (\text{P}1)]
  \item\label{P1} $M\cap c(\W_{N,k})$ is pluripolar,
  \item\label{P2} for any $\alpha\in\J$ and any
  $a\in\Az\setminus\Sigma_\a$ the fiber $M_{a,\a}$ is
  pluripolar and relatively closed in $\Dj$,
  \item\label{P3} for any $a\in c(\W_{N,k})\setminus M$ there exists an $r>0$
  such that for all $f\in\mathcal{F}$ there exists an
  $f_a\in\O(\P(a,r))$ with $f_a=f$ on $\P(a,r)\cap(c(\W_{N,k})\setminus M)$.
\end{enumerate}
Then there exists a relatively closed pluripolar set
$\w{M}\subset\w{\x}_{N,k}$ such that:
\begin{itemize}
  \item $\w{M}\cap c(\W_{N,k})\subset M$,
  \item for any $f\in\mathcal{F}$ there exists
  an $\w{f}\in\O(\w{\x}_{N,k}\setminus\w{M})$ such that $\w{f}=f$ on
  $c(\W_{N,k})\setminus M$,
  \item $\w{M}$ is singular with respect to $\{\w{f}:\
  f\in\mathcal{F}\}$,
  \item if for all $\alpha\in\J$ and all
  $a\in\Az\setminus\Sigma_\a$ we have $M_{a,\a}=\varnothing$, then
  $\w{M}=\varnothing$,
  \item if for all $\alpha\in\J$ and all
  $a\in\Az\setminus\Sigma_\a$ the fiber $M_{a,\a}$ is
  thin in $\Dj$, then $\w{M}$ is analytic in
  $\w{\x}_{N,k}$.
\end{itemize}
\end{prop}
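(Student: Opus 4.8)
The plan is to obtain Proposition \ref{col} as a formal specialization of Theorem \ref{main}. The two statements have identical conclusions, and the only differences in the hypotheses are that here $M$ may meet the whole cross $\W_{N,k}$ (not merely its center) and that $\mathcal{F}$ now consists of genuinely \emph{separately holomorphic} functions. The idea is that this extra separate holomorphy is exactly what produces, for free, the auxiliary closed pluripolar sets $\t{M}_{a,\a}$ and the fiberwise extensions $\t{f}_{a,\a}$ demanded by hypotheses (T3) and (T5) of Theorem \ref{main}: one simply takes $\t{M}_{a,\a}:=M_{a,\a}$ and $\t{f}_{a,\a}:=f_{a,\a}$.

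First I would put $M^0:=M\cap c(\W_{N,k})$ and apply Theorem \ref{main} to the center singularity $M^0$ and to the restricted class $\mathcal{F}^0:=\{f|_{c(\W_{N,k})\setminus M}:\ f\in\mathcal{F}\}$, with the choice $\t{M}_{a,\a}:=M_{a,\a}$ for every $\a\in\J$ and every $a\in\Az\setminus\Sigma_\a$. Hypothesis (T1) is immediate from (P1). For (T2), the center fiber is $(M^0)_{a,\a}=M_{a,\a}\cap\Aj$, which is pluripolar because $M_{a,\a}$ is pluripolar by (P2). By (P2) the fiber $M_{a,\a}$ is also relatively closed, hence a closed pluripolar subset of $\Dj$, so it is an admissible choice of $\t{M}_{a,\a}$; moreover $\t{M}_{a,\a}\cap\Aj=M_{a,\a}\cap\Aj=(M^0)_{a,\a}$, which gives (T3). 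Finally (T4) coincides verbatim with (P3).

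The crucial point is (T5), and this is where separate holomorphy enters. For $\W_{N,k}=\y_{N,k}$, Remark \ref{hol_rem} shows directly that $f_{a,\a}\in\O(\Dj\setminus M_{a,\a})$ for every $\a\in\J$ and every $a\in\Az\setminus\Sigma_\a$; for $\W_{N,k}=\x_{N,k}$ this is the special case of empty singularities, and for $\W_{N,k}=\T_{N,k}$ the same propagation of holomorphy from the top crosses ($\a\in\I$) to the lower crosses ($\a\in\J$) is obtained by enlarging $\a$ to an index $\beta\in\I$ with $\beta\geq\a$ and freezing the extra $D$-variables at $A$-values. Granting this, I set $\t{f}_{a,\a}:=f_{a,\a}$; it lies in $\O(\Dj\setminus\t{M}_{a,\a})$ and agrees with the center value $f_{a,\a}$ on $\Aj\setminus M_{a,\a}=\Aj\setminus(M^0)_{a,\a}$, which is precisely (T5). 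In the boundary case $k=N$ the footnote forms of (T3) and (T5) are met by $\t{M}:=M$ and $\t{f}:=f$, since then $\I=\{(1,\dots,1)\}$, the cross is $D_1\times\dots\times D_N$, and $f$ is holomorphic on $\W_{N,N}\setminus M$ with $M$ closed and pluripolar.

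With all hypotheses in place, Theorem \ref{main} yields a relatively closed pluripolar set $\w{M}\subset\w{\x}_{N,k}$ together with, for each $f\in\mathcal{F}$, an extension $\w{f}\in\O(\w{\x}_{N,k}\setminus\w{M})$ satisfying $\w{f}=f$ on $c(\W_{N,k})\setminus M^0=c(\W_{N,k})\setminus M$, with $\w{M}\cap c(\W_{N,k})\subset M^0\subset M$ and $\w{M}$ singular with respect to $\{\w{f}\}$. Since $\t{M}_{a,\a}=M_{a,\a}$, the last two bullets of Theorem \ref{main} transcribe literally into those of Proposition \ref{col}: if all $M_{a,\a}=\varnothing$ then $\w{M}=\varnothing$, and if all $M_{a,\a}$ are thin in $\Dj$ then $\w{M}$ is analytic in $\w{\x}_{N,k}$. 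I expect the only genuinely delicate point to be the propagation of separate holomorphy from the crosses with $\a\in\I$ to all $\a\in\J$, used to verify (T5) in the $\T_{N,k}$ case, where one must ensure compatibility of the singular sets $\Sigma_\a$ across the different levels; once this is settled, the proposition is a direct bookkeeping identification of its data with that of Theorem \ref{main}.
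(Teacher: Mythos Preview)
Your proposal is correct and follows essentially the same route as the paper: set $M':=M\cap c(\W_{N,k})$, take $\t{M}_{a,\a}:=M_{a,\a}$ and $\t{f}_{a,\a}:=f_{a,\a}$, verify (T1)--(T5) of Theorem~\ref{main} from (P1)--(P3) together with the separate holomorphy of $f$, and read off the conclusion. The paper's own proof is terser --- it simply cites (P2), Definitions~\ref{holT}, \ref{holY} and Remark~\ref{hol_rem} for the verification of (T5) --- whereas you spell out more carefully how holomorphy on the fibers with $\a\in\J$ is inherited from holomorphy on the fibers with $\a\in\I$; your caveat about the compatibility of the $\Sigma_\a$ in the $\T_{N,k}$ case is a fair observation, but the paper does not treat it as a separate issue either.
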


\begin{proof}
Define a set $M':=M\cap c(\W_{N,k})$ and a family
$\mathcal{F}:=\{f|_{c(\W_{N,k})\setminus M}:\ f\in\mathcal{F}\}$. We show
that they satisfy the assumptions of Theorem \ref{main}.

Indeed, for
an $\a\in\J$ and for an $a\in\Az\setminus\Sigma_a$ define
$\t{M}_{a,\a}:=M_{a,\a}$ and $\t{f}_{a,\a}:=f_{a,\a}$. Then:
\begin{itemize}
  \item $M'$ is pluripolar and for all $\a\in\J$ and
  all $a\in\Az\setminus\Sigma_a$ the fibers $M'_{a,\a}$ are
  pluripolar from (P\ref{P1}),
  \item for all $\a\in\J$ and
  all $a\in\Az\setminus\Sigma_a$, the set
  $\t{M}_{a,\a}$ is relatively closed and pluripolar,
  \item for all $f\in\mathcal{F}$, for all $\a\in\J$,
  and all $a\in\Az\setminus\Sigma_a$, the function
  $\t{f}_{a,\a}$ is holomorphic on
  $\Dj\setminus\t{M}_{a,\a}$ (cf. (P\ref{P2}), Definitions
  \ref{holT}, \ref{holY} and Remark \ref{hol_rem}),
  \item from (P\ref{P3}) for any $a\in c(\W_{N,k})\setminus M$ there exists
  an $r>0$ such that for all $f\in\mathcal{F}$ there exists
  an $f_a\in\O(\P(a,r))$ with $f_a=f$ on $\P(a,r)\cap(c(\W_{N,k})\setminus M)$.
\end{itemize}
Thus from Theorem \ref{main} we get the conclusion.
\end{proof}

As we have already mentioned in Section 2, Theorem \ref{main} or, to be more
precise, Proposition \ref{col} implies Main Theorem. The idea of proof is based
on Lemmas 10.2.5, 10.2.7, and 10.2.8 from \cite{JarPfl 2011}.

\begin{proof}[Proof that Proposition \ref{col}
implies Main Theorem]
Let $D_j$, $A_j$,
$\Sigma_{\alpha}$, $\x_{N,k}$, $\T_{N,k}$, $M$, and $\mathcal{F}$ be as in
Theorem \ref{result}. We have to check the assumptions of Proposition \ref{col}. Because
$M$ was pluripolar, for all $\a\in\J$ there exists a
pluripolar set $\Sigma_\a$ such that for all
$a\in\Az\setminus\Sigma_\a$ the fiber $M_{a,\a}$ is
pluripolar. Moreover, because $M$ was relatively closed, all the fibers $M_{a,\a}$ are
relatively closed. To check the last assumption we need the following lemma.

\begin{lem}Under assumptions of Theorem \ref{result} for all $a\in
c(\T_{N,k})\setminus M$ there exists an $r>0$ such that for any
$f\in\mathcal{F}$ there exists an $f_a\in\O(\P(a,r))$ with $f_a=f$ on
$\P(a,r)\cap(c(\T_{N,k})\setminus M)$.
\end{lem}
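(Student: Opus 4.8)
The plan is to reduce everything to a local cross theorem around $a$. Since $a\in c(\T_{N,k})\setminus M$ and $M$ is a relatively closed subset of $\T_{N,k}$ with $M\subset\T_{N,k}$, the set $\T_{N,k}\setminus M$ is relatively open, so I can choose a product of polydiscs $Q=Q_1\times\cdots\times Q_N$ centered at $a$ (with $Q_j\subset D_j$ a polydisc about $a_j$) so small that $Q\cap M=\varnothing$. Put $B_j:=A_j\cap Q_j$. Local pluriregularity being a local property, each $B_j$ is locally pluriregular and $a_j$ is a pluriregular point of $B_j$. I then form the local crosses $\x^Q_{N,k}:=\X_{N,k}((B_j,Q_j)_{j=1}^N)$ and, with $\Sigma^Q_\a:=\Sigma_\a\cap\prod_{j:\a_j=0}B_j$, the local generalized cross $\T^Q_{N,k}:=\mathbb{T}_{N,k}((B_j,Q_j)_{j=1}^N,(\Sigma^Q_\a)_{\a\in\I})$.

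A direct check of the definitions gives $Q\cap\x_{N,k}=\x^Q_{N,k}$ and $Q\cap\T_{N,k}=\T^Q_{N,k}$: for $z\in Q$ one has $z\in\mathcal{X}_\a$ iff the coordinates $z_j$ with $\a_j=0$ lie in $A_j\cap Q_j=B_j$, and then $z_\a\notin\Sigma_\a$ iff $z_\a\notin\Sigma^Q_\a$ because $z_\a\in\prod_{j:\a_j=0}B_j$. Since $Q\cap M=\varnothing$, restricting any $f\in\mathcal{F}$ to the local cross removes the singular set $M$ altogether: if all $\Sigma_\a=\varnothing$ then $f|_{\x^Q_{N,k}}\in\o(\x^Q_{N,k})$, and otherwise $f|_{\T^Q_{N,k}}\in\o^c(\T^Q_{N,k})$, the separate holomorphicity and, in the second case, the continuity in $z_\a$ defining $\o^c$ being inherited once $M$ no longer interferes.

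I would then invoke the appropriate singularity-free cross theorem on $Q$ — Theorem \ref{cross} in the first case, Theorem \ref{arek} in the second — obtaining a single $\hat f\in\O(\w{\x}^Q_{N,k})$ with $\hat f=f$ on the whole local cross, in particular on its center. It remains to place a genuine polydisc about $a$ inside the hull $\w{\x}^Q_{N,k}=\{z\in Q:\ \sum_{j=1}^N\h_{B_j,Q_j}(z_j)<k\}$. As $a_j$ is pluriregular for $B_j$ we have $\h^*_{B_j,Q_j}(a_j)=0$, so by upper semicontinuity of $\h^*_{B_j,Q_j}$ there is for each $j$ a radius $r_j>0$ with $\h_{B_j,Q_j}\le\h^*_{B_j,Q_j}<k/N$ on $\P(a_j,r_j)$; with $r:=\min_jr_j$ this gives $\sum_{j=1}^N\h_{B_j,Q_j}(z_j)<k$ for every $z\in\P(a,r)$, i.e. $\P(a,r)\subset\w{\x}^Q_{N,k}$. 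Setting $f_a:=\hat f|_{\P(a,r)}\in\O(\P(a,r))$ closes the argument: any $z\in\P(a,r)\cap(c(\T_{N,k})\setminus M)=\P(a,r)\cap c(\T_{N,k})$ satisfies $z_j\in A_j\cap Q_j=B_j$ and $z\in Q\cap\T_{N,k}=\T^Q_{N,k}$, hence lies in the center of the local cross where $\hat f=f$, so $f_a=f$ there.

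The step I expect to require the most care is the identification of $Q\cap\T_{N,k}$ with a bona fide local generalized cross carrying the induced singular sets $\Sigma^Q_\a$, together with the verification that restriction to $Q$ keeps $f$ inside $\o$ (resp.\ $\o^c$) with empty singular set and that the $B_j$ retain the (local) pluriregularity required, so that Theorems \ref{cross} and \ref{arek} genuinely apply. Once these identifications are in place, the insertion of a polydisc into the hull via upper semicontinuity of the relative extremal function is routine.
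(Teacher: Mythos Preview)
Your proposal is correct and follows essentially the same approach as the paper: localize to a polydisc $Q$ around $a$ disjoint from $M$, form the local (generalized) $(N,k)$-cross on $(A_j\cap Q_j,Q_j)$, apply the singularity-free cross theorems (Theorem \ref{cross} or Theorem \ref{arek}) to obtain a holomorphic extension on the local hull, and then fit a small polydisc $\P(a,r)$ inside that hull. Your version is in fact slightly more explicit than the paper's, spelling out the identifications $Q\cap\T_{N,k}=\T^Q_{N,k}$ and the upper-semicontinuity argument for $\h^*_{B_j,Q_j}$ that guarantees $\P(a,r)\subset\w\x^Q_{N,k}$, which the paper leaves to the reader.
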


\begin{proof}
Fix an $a\in c(\T_{N,k})\setminus M$. Let $\rho>0$ be such that $\P (a,\rho)\cap
M=\varnothing$\footnotemark\footnotetext{Recall that $M$ is relatively
closed.}\!. Define new crosses
\begin{align*}
&\x_{N,k}^{a,\rho}:=\X_{N,k}((A_j\cap\P(a_j,\rho),\P(a_j,\rho))_{j=1}^N)\footnotemark\!,\\
&\T_{N,k}^{a,\rho}:=\mathbb{T}_{N,k}((A_j\cap\P(a_j,\rho),\P(a_j,\rho))_{j=1}^N,
(\Sigma_\a\cap\P(a_\a,\rho))_{\a\in\I}).
\end{align*}\footnotetext{From the definition of polydisc in a Riemann domain we
obviously have $\P(a_j,\rho)\subset D_j$, $j=1,\ldots,N$.} Fix an $\a\in\I$ and
an
$a\in(\prod\limits_{j:\a_j=0}(A_j\cap\P(a_j,\rho)))\setminus(\Sigma_\a\cap\P(a_\a,\rho))$. Then $$(\prod\limits_{j:\a_j=1}\P(a_j,\rho))\setminus
M_{a,\a}=\prod\limits_{j:\a_j=1}\P(a_j,\rho),$$so for any $f\in\mathcal{F}$ the
function $\prod\limits_{j:\a_j=1}\P(a_j,\rho)\ni z\mapsto f_{a,\a}(z)$ is holomorphic
and $f\in\o(\T_{N,k}^{a,\rho})$. For $\mathcal{F}=\o^c(\T_{N,k}\setminus M)$
we additionally fix a $b\in\prod\limits_{j:\a_j=1}\P(a_j,\rho)$. We
have$$(\prod\limits_{j:\a_j=1}\P(a_j,\rho))\setminus((\Sigma_\a\cap\P(a_\a,\rho))\cup
M_{b,\a})=(\prod\limits_{j:\a_j=1}\P(a_j,\rho))\setminus(\Sigma_\a\cap\P(a_\a,\rho))$$
and for any $f\in\o^c(\T_{N,k}\setminus M)$ the function
$(\prod\limits_{j:\a_j=1}\P(a_j,\rho))\setminus(\Sigma_\a\cap\P(a_\a,\rho))\ni z\mapsto
f_{b,\a}(z)$ is continuous. Thus $\o^c(\T_{N,k}\setminus
M)\subset\o^c(\T_{N,k}^{a,\rho})$. Using Theorem \ref{cross} for
$\mathcal{F}=\o(\x_{N,k}\setminus M)$ and Theorem \ref{arek} for
$\mathcal{F}=\o^c(\T_{N,k}\setminus M)$, we get $$\forall\
f\in\mathcal{F}\ \exists\ \w{f}_a\in\O(\w{\x}_{N,k}^{a,\rho}):\ \w{f}_a=f\text{
on }\T_{N,k}^{a,\rho}\footnotemark\!.$$\footnotetext{Recall that if for
all $\alpha\in\I$ we have $\Sigma_\a=\varnothing$, then
$\T_{N,k}=\x_{N,k}$ and $\T_{N,k}^{a,\rho}=\x_{N,k}^{a,\rho}$.} Choosing
$r\in(0,\rho)$ small enough to have $\P(a,r)\subset\w{\x}_{N,k}^{a,\rho}$
finishes the proof.
\end{proof}

Now, it is clear that all necessary assumptions are satisfied and we can apply
Proposition \ref{col}. We obtain a pluripolar relatively closed set $\w{M}$
such that for all $f\in\mathcal{F}$ there exists an $\w{f}$ with $\w{f}=f$ on
$c(\T_{N,k})\setminus M$ and $\w{M}$ is singular with respect to $\{\w{f}:\
f\in\mathcal{F}\}$.

Fix an $\a\in\I$ and define $D_\a:=\Dz$ and
$G_\a:=\Dj$. Then both $D_\a$ and $G_\a$ are Riemann domains and
$\w{\x}_{N,k}\subset\overset{\resizebox{4em}{1mm}{\merge}}{D_\a\times G_\a}$ is
a Riemann domain of holomorphy. From Proposition 9.1.4 from \cite{JarPfl 2011} there exists a pluripolar set
$\Sigma'_\a\subset \Az$ such that
$\Sigma_\a\subset\Sigma'_\a$ and for all
$a\in\Az\setminus\Sigma'_\a$ the fiber $\w{M}_{a,\a}$ is
singular with respect to the family $\{\w{f}_{a,\a}:\ f\in\mathcal{F}\}$. In
particular, because every $\w{f}_{a,\a}$ is holomorphic on
$(\w{\x}_{N,k})_{a,\a}\setminus \w{M}_{a,\a}$, we have $\w{M}_{a,\a}\subset M_{a,\a}$ for
$a\in\Az\setminus\Sigma'_\a$.
\indent Hence
$$\w{M}\cap\T'_{N,k}
=\bigcup_{\a\in\I}\{z\in\w{M}\cap\mathcal{X}_\a:\
z_\a\not\in\Sigma'_\a\}\subset M.$$
Now for every $\a\in\I$ and every
$a\in\Az\setminus\Sigma'_\a$ the functions $\w{f}_{a,\a}$ and
$f_{a,\a}$ are holomorphic on the domain $\Dj\setminus
M_{a,\a}$ (thanks to inclusion $\w{M}\cap\T'_{N,k}\subset M$) and equal on
$\Aj\setminus M_{a,\a}$, which is not pluripolar. Thus we have
an equality $\w{f}_{a,\a}=f_{a,\a}$ everywhere on $\Dj\setminus
M_{a,\a}$. Because this equality holds for every $\a$ and $a$, we finally get
$\w{f}=f$ on $\T'_{N,k}\setminus M$.
\end{proof}

\section{Proof of Theorem \ref{main}}\label{proof}

First we show that it is sufficient to prove Theorem \ref{main} with
$\W_{N,k}=\x_{N,k}$.

\begin{lem}
Theorem \ref{main} with $\W_{N,k}=\x_{N,k}$ implies Theorem \ref{main} with
$$\W_{N,k}\in\{\T_{N,k},\y_{N,k}\}.$$
\end{lem}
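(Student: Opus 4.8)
The plan is to reduce both cases to the already-granted instance $\W_{N,k}=\x_{N,k}$ by absorbing the part of the center that $\W_{N,k}$ omits into the exceptional set. All three versions of Theorem \ref{main} share the same conclusion, set inside the fixed hull $\w{\x}_{N,k}$; they differ only in that $M$ lives in $c(\W_{N,k})$, the functions of $\mathcal{F}$ are defined on $c(\W_{N,k})\setminus M$, and the first two conclusion clauses refer to $c(\W_{N,k})$. Concretely, for $\W_{N,k}\in\{\T_{N,k},\y_{N,k}\}$ I would put $E:=c(\x_{N,k})\setminus c(\W_{N,k})$ and $M':=M\cup E$, keeping $\mathcal{F}$ unchanged. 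Since $c(\x_{N,k})=A_1\times\ldots\times A_N$ and $M\subset c(\W_{N,k})$, we have $c(\x_{N,k})\setminus M'=c(\W_{N,k})\setminus M$, so every $f\in\mathcal{F}$ is already defined precisely on the center of $\x_{N,k}$ with $M'$ deleted; no new values of $f$ need to be invented, which is exactly what makes absorbing $E$ harmless.

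First I would check that $E$ is pluripolar: fixing any one index $\a_0$ occurring in the definition of $c(\W_{N,k})$ (so $\a_0\in\I$ for $\T_{N,k}$, $\a_0\in\J$ for $\y_{N,k}$), the definition of the center gives $E\subset\{z\in A_1\times\ldots\times A_N:\ z_{\a_0}\in\Sigma_{\a_0}\}$, a cylinder over the pluripolar $\Sigma_{\a_0}$, hence pluripolar; thus $M'$ is pluripolar and (T\ref{A1}) holds. The real work concerns the fibers $E_{a,\a}$ for $\a\in\J$ and $a\in\Az\setminus\Sigma_\a$, because for $\x_{N,k}$ the assumptions (T\ref{A2})--(T\ref{A5}) are indexed by $\J$. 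Using that the $\Dz$-projection of a merged point satisfies $(\overset{\resizebox{1,6em}{1mm}{\merge}}{a,z})_\a=a$, membership $(\overset{\resizebox{1,6em}{1mm}{\merge}}{a,z})\in E$ forces $a\in\Sigma_\a$ whenever $\a$ itself is one of the indices cutting out $c(\W_{N,k})$. Hence $E_{a,\a}=\varnothing$ for every $\a\in\J$ in the $\y_{N,k}$ case, and for every $\a\in\I$ in the $\T_{N,k}$ case; there $M'_{a,\a}=M_{a,\a}$, and (T\ref{A2})--(T\ref{A5}) transfer verbatim with the same $\t{M}_{a,\a}$ and $\t{f}_{a,\a}$.

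The main obstacle is $\T_{N,k}$ with $\a\in\J\setminus\I$: now $\a$ is not among the indices $(\Sigma_\beta)_{\beta\in\I}$ defining $c(\T_{N,k})$, the previous cancellation fails, and $E_{a,\a}$ need not be pluripolar. The remedy is that $\T_{N,k}$ depends only on $(\Sigma_\beta)_{\beta\in\I}$, leaving $(\Sigma_\a)_{\a\in\J\setminus\I}$ free to enlarge. For such an $\a$ I would pick $\beta_0\in\I$ with $\{j:\a_j=1\}\subset\{j:(\beta_0)_j=1\}$ (possible as $|\a|<k\le N$), so that $\{j:(\beta_0)_j=0\}\subset\{j:\a_j=0\}$, and replace $\Sigma_\a$ by $\Sigma'_\a:=\Sigma_\a\cup\pi^{-1}(\Sigma_{\beta_0})$, where $\pi:\Az\to\prod_{j:(\beta_0)_j=0}D_j$ is the coordinate projection; as a cylinder over the pluripolar $\Sigma_{\beta_0}$ the set $\pi^{-1}(\Sigma_{\beta_0})$ is pluripolar, hence so is $\Sigma'_\a$. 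Because the $\prod_{j:(\beta_0)_j=0}D_j$-component of any point merged from $a$ and some $z$ equals $\pi(a)$ and is independent of $z$, for $a\in\Az\setminus\Sigma'_\a$ we get $\pi(a)\notin\Sigma_{\beta_0}$, so no such merged point lies in $E\subset\{w:\ w_{\beta_0}\in\Sigma_{\beta_0}\}$, i.e.\ $E_{a,\a}=\varnothing$ again. Replacing the index range $\Az\setminus\Sigma_\a$ by the smaller $\Az\setminus\Sigma'_\a$ only weakens (T\ref{A2})--(T\ref{A5}), so these still hold, while $\T_{N,k}$ and $c(\T_{N,k})$ are unchanged.

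Finally I would verify that $(M',\mathcal{F})$, together with $(\Sigma_\a)_{\a\in\I}$ and the enlarged $(\Sigma'_\a)_{\a\in\J\setminus\I}$, satisfies (T\ref{A1})--(T\ref{A5}) for $\x_{N,k}$ — note that (T\ref{A4}) is literally the hypothesis for $\W_{N,k}$, since the relevant center sets coincide — apply the granted case $\W_{N,k}=\x_{N,k}$ of Theorem \ref{main}, and transport the output. The resulting relatively closed pluripolar $\w{M}\subset\w{\x}_{N,k}$ satisfies $\w{M}\cap c(\x_{N,k})\subset M'$; intersecting with $c(\W_{N,k})$ and using $E\cap c(\W_{N,k})=\varnothing$ yields $\w{M}\cap c(\W_{N,k})\subset M$, while the extensions obey $\w{f}=f$ on $c(\x_{N,k})\setminus M'=c(\W_{N,k})\setminus M$. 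The singularity, emptiness and analyticity clauses speak only of $\w{M}$ inside $\w{\x}_{N,k}$ and of the unchanged sets $\t{M}_{a,\a}$, so they carry over at once, a hypothesis $\t{M}_{a,\a}=\varnothing$ (resp.\ thin) on the larger set $\Az\setminus\Sigma_\a$ holding a fortiori on $\Az\setminus\Sigma'_\a$. The degenerate case $k=N$ is covered verbatim by the footnote conventions on the global $\t{M}$ and $\t{f}$.
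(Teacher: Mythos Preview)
Your proof is correct and follows the same reduction as the paper: pass from $\W_{N,k}$ to $\x_{N,k}$ by enlarging $M$ with the pluripolar set $E=c(\x_{N,k})\setminus c(\W_{N,k})$, so that $c(\x_{N,k})\setminus M'=c(\W_{N,k})\setminus M$ and the family $\mathcal{F}$ need not be touched; then check that the fibers $E_{a,\a}$ vanish for admissible $a$, so (T\ref{A1})--(T\ref{A5}) carry over verbatim.

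Where you differ from the paper is in the handling of $\W_{N,k}=\T_{N,k}$. The paper adds the single set $\t{\Delta}=\bigcap_{\a\in\J}\{a:a_\a\in\Sigma_\a\}$ to $M$ in both cases. For $\y_{N,k}$ this is exactly $E$ and all is well, but for $\T_{N,k}$ the true complement is the larger $\Delta=\bigcap_{\a\in\I}\{a:a_\a\in\Sigma_\a\}$, and the paper only records the inclusion $c(\T_{N,k})\setminus M\subset c(\x_{N,k})\setminus M'$ (its formula~($\ast$)); the functions of $\mathcal{F}$ are then not defined on all of $c(\x_{N,k})\setminus M'$, so the $\x_{N,k}$ case cannot be invoked as written. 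Your remedy---use the exact $E=\Delta$, and for each $\a\in\J\setminus\I$ enlarge $\Sigma_\a$ by the preimage of some $\Sigma_{\beta_0}$, $\beta_0\in\I$, $\beta_0\geq\a$, so that $E_{a,\a}=\varnothing$ persists---is the natural fix and is fully justified (this enlargement is harmless because $\T_{N,k}$ and $c(\T_{N,k})$ depend only on $(\Sigma_\beta)_{\beta\in\I}$, while (T\ref{A2})--(T\ref{A5}) only become weaker). In short: same idea, but your argument for $\T_{N,k}$ is cleaner and closes a loose end present in the paper's version.
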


\begin{proof}
Let $D_j$,
$A_j$, $\Sigma_\alpha$, $\x_{N,k}$, $\T_{N,k}$, $\y_{N,k}$, $M\subset
c(\W_{N,k})$ and $\mathcal{F}\subset\{f:\ f:\W_{N,k}\setminus M\to\c\}$, where
$\W_{N,k}\in\{\T_{N,k}.\y_{N,k}\}$, be like in Theorem \ref{main}. Assume that
this theorem is true with $\W_{N,k}=\x_{N,k}$.

Observe that $c(\y_{N,k})=c(\x_{N,k})\setminus\Delta$ and
$c(\T_{N,k})=c(\x_{N,k})\setminus\t{\Delta}$, where
\begin{align*}
&\Delta:=\bigcap_{\alpha\in\I}\{a\in
A_1\times\ldots\times A_N:\ a_{\alpha}\in\Sigma_\alpha\},\\
&\t{\Delta}:=\bigcap_{\alpha\in\J}\{a\in
A_1\times\ldots\times A_N:\ a_{\alpha}\in\Sigma_\alpha\},
\end{align*} are pluripolar
subsets of $c(\x_{N,k})$, where $a_{\a}$ denotes the projection of $a$ on
$\Az$.

Define $M':=M\cup\t{\Delta}\subset c(\x_{N,k})$. Then $c(\x_{N,k})\setminus\t{\Delta})\setminus
M = c(\x_{N,k})\setminus M'$ and
\begin{align*}
\tag{$\ast$}&c(\T_{N,k})\setminus
M=(c(\x_{N,k})\setminus\Delta)\setminus
M\subset(c(\x_{N,k}\setminus\t{\Delta})\setminus M\textnormal{ for }M\subset
c(\T_{N,k}),\\
\tag{$\ast\ast$}&c(\y_{N,k})\setminus
M=(c(\x_{N,k})\setminus\t{\Delta})\setminus M\textnormal{ for }M\subset
c(\y_{N,k}).
\end{align*}
Define
$\mathcal{F}':=\{f|_{c(\x_{N,k})\setminus M'}:\ f\in\mathcal{F}\}$. Then $M'$ is
pluripolar and for all $\a\in\J$ and all $a\in
\Az\setminus\Sigma_\a$ we have
$\t{\Delta}_{a,\a}=\varnothing$, so $M'_{a,\a}=M_{a,\a}$. Thus $M'$ and the
family $\mathcal{F}'$ satisfies assumptions of Theorem \ref{main} with
$\W_{N,k}=\x_{N,k}$. Then there exists $\w{M'}\subset\w{\x}_{N,k}$, relatively
closed, pluripolar, having all properties from the thesis. Properties
($\ast$) and ($\ast\ast$) give us the conclusion for
$\W_{N,k}\in\{\T_{N,k},\y_{N,k}\}$. \end{proof}\medskip

\begin{proof}[Proof of Theorem \ref{main} with $\W_{N,k}=\x_{N,k}$]
$\ $\\
\indent {\bf Step 1.} Theorem \ref{main} is true for any $N$ when $k=1$ (Theorem
\ref{classic}) and when $k=N$ (in this case we assumed the thesis).

\indent {\bf Step 2.} In particular, theorem is true for $N=2$, $k=1,2$. Assume
we already have Theorem \ref{main} for $(N-1,k)$, where $k\in\{1,\ldots, N-1\}$,
and for $(N,1)$,\ldots ,$(N,k-1)$, where $k\in\{2,\ldots,N-1\}$. We need to prove it for
$(N,k)$.

\indent {\bf Step 3.} 
Fix an $s\in\{1,\ldots ,N\}$ (to simplify the notation let $s=N$).
Let $$Q_N:=\{a_N\in A_N:
M_{(\cdot,a_N)}\text{ is not pluripolar}\}.$$Then $Q_N$ is pluripolar. Define
$$\x_{N-1,k}^{(s)}:=\X_{N-1,k}((A_j,D_j)_{j=1,j\neq s}^{N}),\ s=1,\ldots,N,$$
in particular$$\x_{N-1,k}^{(N)}=\x_{N-1,k}:=\X_{N-1,k}((A_j,D_j)_{j=1}^{N-1}).$$

Fix an $a_N\in A_N\setminus Q_N$ and define a family
$\{f(\cdot,a_N):\ f\in\mathcal{F}\}\subset\{f:\ f:c(\x_{N-1,k})\to\C\}$. Then:
\begin{itemize}
  \item $M_{(\cdot, a_N)}\subset c(\x_{N-1,k})$ is pluripolar.
  \item For any $\alpha'\in\mathcal{Y}^{N-1}_{k}$
  and any
  $a'\in A_0^{\a'}\setminus\Sigma_{\a'}$\footnotemark\footnotetext{By
  $A_0^{\a'}$ we denote the product
  $\prod\limits_{j\in\{1,\ldots,N-1\}:\,\a'_j=0}A_j$. Analogously for
  $A_1^{\a'}$ and $D_1^{\a'}$.} the fiber $(M_{(\cdot,a_N)})_{a',\a'}$ equals
  $M_{a,\a}$, where $a=(a',a_N)$ and $\a=(\a',0)$, so it is pluripolar.
  \item For $\alpha'\in\mathcal{Y}^{N-1}_{k},\ 
  a'\in A_0^{\a'}\setminus\Sigma_{\alpha'}$ we define 
  $\t{M}_{a',\a'}:=\t{M}_{a,\a}$, where
  $a=(a',a_N),\ \a=(\a',0)$. Then
  $\t{M}_{a',\a'}\subset\Dj= D_1^{\a'}$
  is closed, pluripolar and
  $\t{M}_{a',\a'}\cap A_1^{\a'}\subset M_{a',\a'}$.
  \item For any $a'\in c(\x_{N-1,k})\setminus M_{(\cdot,a_N)}$ there exists an
  $r>0$ (the same as for $a=(a',a_N)$) such that for any
  $f\in\mathcal{F}$ there exists $f_{a'}\in\O(\P(a',r))$
  such that $f_{a'}=f(\cdot,a_N)$ on $\P(a',r)\cap(c(\x_{N-1,k})\setminus
  M_{(\cdot,a_N)})$.
  \item For $f\in\mathcal{F}$, for any
  $\alpha'\in\mathcal{Y}^{N-1}_{k}$
  and any $a'\in A_0^{\a'}\setminus\Sigma_{\alpha'}$, define
  $
  \t{f}_{a',\a'}:=\t{f}_{a,\a}\in\O(\Dj\setminus\t{M}_{a,\a})
  =\O(D_1^{\a'}\setminus\t{M}_{a',\a'})$, where $a=(a',a_N),\
  \a=(\a',0)$. Then $\t{f}_{a',\a'}=f_{a',\a'}$ on
  $A_1^{\a'}\setminus (M_{(\cdot,a_N)})_{a',\a'}$.
\end{itemize}
From the inductive assumption we get a relatively closed pluripolar set
$\w{M}_{a_N}\subset\w{\x}_{N-1,k}$ such that:
\begin{itemize}
  \item $\w{M}_{a_N}\cap c(\x_{N-1,k})\subset M_{(\cdot,a_N)}$,
  \item for any $f\in\mathcal{F}$
  there exists an $\w{f}_{a_N}\in\O(\w{\x}_{N-1,k}\setminus\w{M}_{a_N})$ such
  that $\w{f}_{a_N}=f(\cdot,a_N)$ on $c(\x_{N-1,k})\setminus M_{(\cdot,a_N)}$,
  \item $\w{M}_{a_N}$ is singular with respect to $\{\w{f}_{a_N}:\
  f\in\mathcal{F}\}$,
  \item if for all $\alpha'\in\mathcal{Y}^{N-1}_{k}$ and all
  $a'\in A_0^{\a'}\setminus\Sigma_{\alpha'}$, we have
  $\t{M}_{a',\a'}=\varnothing$, then $\w{M}_{a_N}=\varnothing$,
  \item if for all $\alpha'\in\mathcal{Y}^{N-1}_{k}$ and all
  $a'\in A_0^{\a'}\setminus\Sigma_{\alpha'}$, the set
  $\t{M}_{a',\a'}$ is thin in $D_1^{\a'}$, then $\w{M}_{a_N}$
  is analytic in $\w{\x}_{N-1,k}$.
\end{itemize}

Define a new cross
$$\Z_N:=\X(c(\x_{N-1,k}),A_N;\w{\x}_{N-1,k},D_N).$$
Observe that $\Z_N$ with original $M$,
$\Sigma_{(0,1)}:=\Sigma_{(0,\ldots,0,1)}$, $\Sigma_{(1,0)}:=Q_N$, and the family
$\mathcal{F}$ satisfies all the assumptions of Theorem \ref{main} with $N=2$, $k=1$. Indeed:
\begin{itemize}
  \item For all $a'\in c(\x_{N-1,k})\setminus\Sigma_{(0,\ldots,0,1)}$ and all
  $a_N\in A_N\setminus Q_N$ the fibers $M_{(a',\cdot)}$, $M_{(\cdot,a_N)}$ are
  pluripolar from (T\ref{A1}), (T\ref{A2}) and definition of $Q_N$.
  \item For all $a'\in c(\x_{N-1,k})\setminus\Sigma_{(0,\ldots,0,1)}$
  from (T\ref{A3}) there exists an $\t{M}_{a'}\subset D_N$ closed pluripolar
  such that $\t{M}_{a'}\cap A_N\subset M_{(a',\cdot)}$. For $a_N\in A_N\setminus
  Q_N$ set $\t{M}_{a_N}:=\w{M}_{a_N}$. Then $\t{M}_{a_N}$ is closed pluripolar
  in $\w{\x}_{N-1,k}$ and $\t{M}_{a_N}\cap c(\x_{N-1,k})\subset
  M_{(\cdot,a_N)}$.
  \item For all $(a',a_N)\in (c(\x_{N-1,k})\times A_N)\setminus M$ from
  (T\ref{A4}) there exists an $r>0$ such that for all $f\in\mathcal{F}$ there
  exists an $f_{(a',a_N)}\in\O(\mathbb{P}((a',a_N),r))$ such that $$
  f_{(a',a_N)}=f\text{ on }\mathbb{P}((a',a_N),r)\cap(c(\x_{N,k}\setminus M)).$$
  \item For all $a'\in c(\x_{N-1,k})\setminus\Sigma_{\alpha=(0,\ldots,0,1)}$
  from (T\ref{A5}) there exists an $f_{a'}\in\O(D_N\setminus \t{M}_{a'})$ such
  that $f_{a'}=f$ on $A_N\setminus M_{(a',\cdot)}$. For an $a_N\in A_N\setminus
  Q_N$ define $f_{a_N}:=\w{f}_{a_N}$. Then $f_{a_N}\in\O(\w{\x}_{N-1,k}\setminus
  \t{M}_{a_N})$ and $f_{a_N}=f$ on $c(\x_{N-1,k})\setminus M_{(\cdot,a_N)}$.
\end{itemize}
Then there exists an $\w{M}_N\subset\w{\Z}_N$ relatively closed pluripolar
such that:
\begin{itemize}
  \item $\w{M}_N\cap c(\x_{N,k})\subset M$,
  \item for any $f\in\mathcal{F}$ there
  exists an $\w{f}_N\in\O(\w{\Z}_N\setminus\w{M}_N)$ such that $\w{f}_N=f$ on
  $c(\x_{N,k})\setminus M$,
  \item $\w{M}_N$ is singular with respect to $\{\w{f}_N:\ f\in\mathcal{F}\}$,
  \item if for all $a'\in c(\x_{N-1,k})\setminus\Sigma_{(0,\ldots,0,1)}$
  we have $\t{M}_{a'}=\varnothing$ and for all $a_N\in A_N\setminus Q_N$
  we have $\t{M}_{a_N}=\varnothing$, then $\w{M}_N=\varnothing$,
  \item if for all $a'\in c(\x_{N-1,k})\setminus\Sigma_{(0,\ldots,0,1)}$
  the set $\t{M}_{a'}$ is thin in $D_N$ and for all $a_N\in A_N\setminus Q_N$
  the set $\t{M}_{a_N}$ is thin in $\w{\x}_{N-1,k}$, then $\w{M}_N$ is analytic
  in $\w{\Z}_N$.
\end{itemize}

We repeat the reasoning above for all $s=1,\ldots,N-1$, obtaining a family of
functions $\{\w{f}_s\}_{s=1}^N$ such that for any $s\in\{1,\ldots,N\}$ we have
$\w{f}_s=f$ on $c(\x_{N,k})\setminus M$. Define a new function
$$F_f(z):=\left\{
\begin{matrix}
\w{f}_1(z)&\text{ for }&z\in\w{\Z}_1\setminus\w{M}_1\\
\vdots& & \\
\w{f}_N(z)&\text{ for }&z\in\w{\Z}_N\setminus\w{M}_N\\
\end{matrix}\right. .$$

Assume for a moment that we have the following lemma.
\begin{lem}\label{sklejka}
Function $F_f$ is well defined on
$\left(\bigcup\limits_{s=1}^N\Z_s\right)\setminus\left(\bigcup\limits_{s=1}^N\w{M}_s\right)$.
\end{lem}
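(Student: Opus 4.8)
The plan is to show that the functions $\w{f}_s$, defined on the overlaps $\w{\Z}_s\setminus\w{M}_s$, agree on pairwise intersections, so that gluing produces a well-defined $F_f$. The key point is that each $\w{f}_s$ restricts to $f$ on the common center $c(\x_{N,k})\setminus M$, and that this set is "large enough" (non-pluripolar in the relevant fibers) to force the extensions to coincide wherever two of the domains $\w{\Z}_s\setminus\w{M}_s$ meet. I would first record the elementary set-theoretic fact that it suffices to verify $\w{f}_s=\w{f}_t$ on $(\w{\Z}_s\cap\w{\Z}_t)\setminus(\w{M}_s\cup\w{M}_t)$ for all pairs $s\neq t$, since $F_f$ is defined by cases and the only obstruction to well-definedness is disagreement on an overlap.

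**Next I would** fix a pair $s\neq t$ (say $s=N$, $t=N-1$ after relabelling) and analyze the intersection $\w{\Z}_N\cap\w{\Z}_{N-1}$. By construction each $\w{\Z}_s$ is a cross of the form $\X(c(\x_{N-1,k}),A_N;\w{\x}_{N-1,k},D_N)$ with the $s$-th variable singled out; the intersection of two such crosses contains the full center $c(\x_{N,k})$ and, more usefully, contains fibers on which one can apply an identity principle. The strategy is to slice: for a generic point in the overlap, freeze all but one suitably chosen variable so that the two extensions $\w{f}_N$ and $\w{f}_{N-1}$ become holomorphic functions of that single variable (off a pluripolar, hence thin, exceptional set), agreeing on a set that is \emph{not} pluripolar — namely a piece of $c(\x_{N,k})\setminus M$ together with the already-constructed fiberwise extensions. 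Two holomorphic functions on a connected Riemann domain that coincide on a non-pluripolar set must be identical, which yields $\w{f}_N=\w{f}_{N-1}$ on that slice, and then on the whole overlap by letting the frozen variables vary.

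**The hard part will be** bookkeeping the exceptional sets: each $\w{f}_s$ is holomorphic only on $\w{\Z}_s\setminus\w{M}_s$, and one must ensure that when restricting to a slice the union $\w{M}_s\cup\w{M}_t$ meets that slice in a pluripolar (indeed, relatively closed pluripolar) set, so that the slice-domain minus this set is still connected and the non-pluripolar agreement set survives. Here I would lean on assumption (T\ref{A2}) and the fiberwise pluripolarity of the $M_{a,\a}$, together with the fact (used repeatedly in the inductive construction) that $\w{\x}_{N-1,k}$ and its analogues are connected Riemann domains of holomorphy, so that removing a relatively closed pluripolar set preserves connectedness. The one genuinely delicate verification is that the agreement set on each slice is non-pluripolar: this follows because both $\w{f}_s$ and $\w{f}_t$ equal $f$ on the center $c(\x_{N,k})\setminus M$, whose relevant projections are products of the locally pluriregular (hence non-pluripolar) sets $A_j$ minus a pluripolar piece.

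**Finally**, once pairwise agreement is established, well-definedness of $F_f$ on $\big(\bigcup_{s=1}^N\Z_s\big)\setminus\big(\bigcup_{s=1}^N\w{M}_s\big)$ is immediate: any point of this union lies in some $\Z_s\setminus\w{M}_s$, and if it lies in two of them the values assigned by the defining cases coincide by the pairwise identity just proved. I expect the main obstacle to be precisely the slicing argument — choosing, for each pair $(s,t)$ and each point of the overlap, a variable to vary so that both extensions are simultaneously holomorphic and the identity principle applies, while controlling the pluripolar exceptional sets uniformly enough to conclude.
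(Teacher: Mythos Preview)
Your overall reduction---pairwise agreement of $\w{f}_s$ and $\w{f}_t$ on overlaps via the identity principle for holomorphic functions coinciding on a non-pluripolar set---is correct and matches the paper's. However, the specific mechanism you propose, freezing all but one variable and applying the identity principle slice by slice, has a real gap. Consider a point $(z',a_{N-1},a_N)$ in the piece $\w{\x}_{N-2,k}\times A_{N-1}\times A_N$ of $\Z_{N-1}\cap\Z_N$ with $z'\notin A_1\times\cdots\times A_{N-2}$. Any one-variable slice through this point misses $c(\x_{N,k})$ entirely, since at least one frozen coordinate of $z'$ lies outside its $A_j$; hence on that slice there is no non-pluripolar set where $\w{f}_{N-1}$ and $\w{f}_N$ are \emph{already known} to agree, and the identity principle cannot be invoked directly. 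Your appeal to ``already-constructed fiberwise extensions'' does not close this, because their agreement at such points is exactly what is in question.

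The paper sidesteps this by arguing globally rather than fiberwise. It first checks that every connected component of $\Z_{N-1}\cap\Z_N$ meets the center, and that $\w{\Z}_{N-1}\cap\w{\Z}_N$ contains the connected cross $\x_{N,1}$ (hence the whole center lies in one component). It follows that a single connected component $\Omega$ of the open set $\w{\Z}_{N-1}\cap\w{\Z}_N$ contains all of $\Z_{N-1}\cap\Z_N$. Since $\w{M}_{N-1}\cup\w{M}_N$ is relatively closed and pluripolar in $\Omega$, the set $\Omega\setminus(\w{M}_{N-1}\cup\w{M}_N)$ is still a domain on which both $\w{f}_{N-1}$ and $\w{f}_N$ are holomorphic and agree on the non-pluripolar center; one application of the identity principle then gives equality on all of $(\Z_{N-1}\cap\Z_N)\setminus(\w{M}_{N-1}\cup\w{M}_N)$. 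Your slicing scheme could in principle be repaired by iterating slices to walk from an arbitrary point back to the center, but that amounts to reconstructing the connectedness statement, which is the actual content of the lemma.
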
\smallskip

{\bf Step 4.} Define a 2-fold cross
$$\Z:=\mathbb{X}(\x_{N-1,k-1},A_N;\w{\x}_{N-1,k},D_N)
\subset\bigcup_{s=1}^N \Z_s,$$ a pluripolar set
$$\t{M}:=\left(\bigcup_{s=1}^N\w{M}_s\right)\cap(\x_{N-1,k-1}\times A_N)$$
and a family
$$\t{\mathcal{F}}:=\{\t{f}:=F_f|_{(\x_{N-1,k-1}\times
A_N)\setminus\t{M}}:\ f\in\mathcal{F}\}.$$
We show that $\Z$, $\t{M}$, and $\t{\mathcal{F}}$ satisfy the
assumptions of Theorem \ref{main} with $N=1$ and $k=1$.
\begin{itemize}
  \item $\t{M}$ is pluripolar in $\x_{N-1,k-1}\times A_N$, so there exist
  pluripolar sets $P\subset\x_{N-1,k-1}$, $Q\subset A_N$ such that for all
  $z'\in\x_{N-1,k-1}\setminus P$, $a_N\in A_N\setminus Q$, the fibers
  $\t{M}_{(z',\cdot)}$, $\t{M}_{(\cdot,a_N)}$ are pluripolar.
  \item Let $z'\in\x_{N-1,k-1}\setminus P$. Then there exists an $s\in\{1,\ldots
  ,N-1\}$ such that
  \begin{align*}
  \tag{$\star$}\{z'\}\times
  D_N\subset\x_{N-1,k}^{(s)}\times A_s.
  \end{align*}
  Indeed, let $z'\in\x_{N-1,k-1}$. Then
  $z'=z'_\alpha$ for some $\alpha\in\{0,1\}^{N-1}$, $|\alpha|=k-1$,
  where $z'_\a=(z_{\a_1},\ldots,z_{\a_{N-1}})$ and $z_{\a_j}=a_j\in A_j$ when
  $\a_j=0$, $z_{\a_j}=z_j\in D_j$ otherwise. We may assume that
  $z'=(z_1,\ldots,z_{k-1},a_k,\ldots,a_{N-1})$. Set $s=k$. Fix a
  $z_N\in D_N$. Then $(z_1,\ldots,z_{k-1},a_k,\ldots,a_{N-1},z_N)\in\{z'\}\times
  D_N$ and
  $$(z_1,\ldots,z_{k-1},a_k,\ldots,a_{N-1},z_N)\in\x_{N-1,k}^{(s)}\times A_s.$$
  
  Define ${\t{M}}_{z'}:=(\w{M}_{s})_{(z',\cdot)}$. Then
  $\t{M}_{z'}$ is pluripolar relatively closed in $D_N$ and $\t{M}_{z'}\cap A_N\subset \t{M}_{(z',\cdot)}$.\\
  For an $a_N\in A_N\setminus Q$ define $\t{M}_{a_N}:=(\w{M}_{N})_{(\cdot,a_N)}$
  - relatively closed pluripolar in $\w{\x}_{N-1,k}$ such that
  $\t{M}_{a_N}\cap\x_{N-1,k-1}\subset\t{M}_{(\cdot,a_N)}$.
  \item For any $(z',a_N)\in (\x_{N-1,k-1}\times A_N)\setminus \t{M}$ there exist
  an $s\in\{1,\ldots,N-1\}$ and an $r>0$ such that $\P((z',a_N),r)\subset
  \w{\Z}_s\setminus\w{M}_s$. Then $\w{f}_s\in\O(P((z',a_N),r))$ and
  $\w{f}_s=F_f=\t{f}$ on $P((z',a_N),r)\cap((\x_{N-1,k-1}\times A_N)\setminus
  \t{M})$.
  \item For a $z'\in\x_{N-1,k-1}\setminus P$ choose an $s$ to have ($\star$) and
  define $\t{f}_{z'}:=\w{f}_s(z',\cdot)$. Then $\t{f}_{z'}$ is holomorphic on
  $D_N\setminus\t{M}_{z'}$ and equals $\t{f}(z',\cdot)$ on
  $A_N\setminus\t{M}_{(z',\cdot)}$.\\
  For an $a_N\in A_N\setminus Q$ define
  $\t{f}_{a_N}:=\w{f}_s(\cdot,a_N)$. Then $\t{f}_{a_N}$ is holomorphic
  on $\w{\x}_{N-1,k}\setminus\t{M}_{a_N}$ and equals $\t{f}(\cdot,a_N)$ on
  $\x_{N-1,k-1}\setminus\t{M}_{(\cdot,a_N)}$.
\end{itemize}
Now from Theorem \ref{main} there exists a relatively closed
pluripolar set $\w{M}\subset\w{\Z}$ such that:
\begin{itemize}
  \item $\w{M}\cap (\x_{N-1,k-1}\times A_N)\subset \t{M}$, in particular,
  $\w{M}\cap c(\x_{N,k})\subset M$,
  \item for any $f\in\mathcal{F}$ there exists an
  $\w{f}\in\O(\w{\Z}\setminus\w{M})$ such that $\w{f}=\t{f}$ on
  $(\x_{N-1,k-1}\times A_N)\setminus\t{M}$, in particular $\w{f}=f$ on
  $c(\x_{N,k})\setminus M$,
  \item $\w{M}$ is singular with respect to $\{\w{f}:\ f\in\mathcal{F}\}$,
  \item if for all $z'\in\x_{N-1,k-1}\setminus P$ we have
  $\t{M}_{z'}=\varnothing$ and for all $a_N\in A_N\setminus Q$ $\t{M}_{a_N}=\varnothing$, then
  $\w{M}=\varnothing$,
  \item if for all $z'\in\x_{N-1,k-1}\setminus P$ the set $\t{M}_{z'}$ is thin
  in $D_N$ and for all $a_N\in A_N\setminus Q$ the set $\t{M}_{a_N}$ is thin in
  $\w{\x}_{N-1,k}$, then $\w{M}$ is analytic in $\w{\Z}$.
\end{itemize}

Now assume that for any $\alpha\in\J$
and any $a\in\Az\setminus\Sigma_\a$ we have
$\t{M}_{a,\a}=\varnothing$. Then for any $s\in\{1,\ldots,N\}$ and for any
$a_s\in A_s\setminus Q_s$ we have $\w{M}_{a_s}=\varnothing$ what implies that
for all $s\in\{1,\ldots,N\}$ we have $\w{M}_s=\varnothing$. Then from the
definitions of $\t{M}_{z'}$ and $\t{M}_{a_N}$ we get that for any
$z'\in\x_{N-1,k-1}\setminus P$ we have $\t{M}_{z'}=\varnothing$ and for all
$a_N\in A_N\setminus Q$ we have $\t{M}_{a_N}=\varnothing$, thus
$\w{M}=\varnothing$.\\
  
Analogously if for all $\alpha\in\J$ and all
$a\in\Az\setminus\Sigma_\a$ the fiber $\t{M}_{a,\a}$ is
thin in $\Dj$, then for any $s\in\{1,\ldots,N\}$ and any
$a_s\in A_s\setminus Q_s$ the set $\w{M}_{a_s}$ is analytic (thus thin) in
$\w{\x}^{(s)}_{N-1,k}$, so for all $s\in\{1,\ldots,N\}$ the set $\w{M}_s$ is
analytic in $\w{\Z}_s$. Because fibers of analytic sets are also analytic we get that
for any $z'\in\x_{N-1,k-1}\setminus P$ the set $\t{M}_{z'}$ is thin in $D_N$ and
for any $a_N\in A_N\setminus Q$ the set $\t{M}_{a_N}$ is thin in
$\w{\x}_{N-1,k}$. Then, finally, $\w{M}$ is analytic in $\w{\Z}$.\\

Now we show that $\w{\x}_{N,k}\subset\w{\Z}$. First observe that
if $z=(z',z_N)\in\w{\x}_{N,k}$, then $z'\in\w{\x}_{N-1,k}$. From Lemma
\ref{equalh} for $(z_1,\ldots,z_N)=(z',z_N)\in\w{\x}_{N,k}$ we get
\begin{align*}
\tag{$\ddag$}\h_{\x_{N-1,k-1},\w{\x}_{N-1,k}}(z')+\h_{A_N,D_N}(z_N)=\h_{\w{\x}_{N-1,k-1}}(z')+\h_{A_N,D_N}(z_N)\
.\end{align*}
For $z\in\w{\x}_{N-1,k-1}\subset\w{\x}_{N,k}$ ($\ddag$)$=\h_{A_N,D_N}(z_N)$,
which is less than $1$ from properties of relative extremal function, and for
$z\in\w{\x}_{N,k}\setminus\w{\x}_{N-1,k-1}$ we use Lemma \ref{inc}
$$
(\ddag)=\left(\sum_{j=1}^{N-1}\h_{A_j,D_j}(z_j)\right)-k+1+\h_{A_N,D_N}(z_N)<k-1+1=1\
.$$

To show the opposite inclusion take $(z_1,\ldots,z_N)=(z',z_N)\in\w{\Z}$. From
properties of relative extremal function and Lemma \ref{inc} we get
$$
\left(\sum_{j=1}^{N-1}\h_{A_j,D_j}(z_j)\right)+\h_{A_N,D_N}(z_N)\leq\h_{\w{\x}_{N-1,k-1}}(z')+k-1+\h_{A_N,D_N}(z_N)$$
$$\leq
\h_{\x_{N-1,k-1},\w{\x}_{N-1,k}}(z')+\h_{A_N,D_N}(z_N)+k-1<1+k-1=k\ .$$

Thus, it is left to prove Lemma \ref{sklejka}:
\begin{proof}[Proof of Lemma \ref{sklejka}]

Fix $s$ and $p$. We want to show that $\w{f}_s=\w{f}_p$ on
$(\Z_s\cap\Z_p)\setminus (\w{M}_s\cup\w{M}_p)$. To simplify the notation we may
assume that $s=N-1$ and $p=N$.

\indent {\bf Step 1.} Every connected component of $\Z_{N-1}\cap\Z_N$ contains
part of the center.

\indent From the definition of $\Z_{N-1}$ and
$\Z_N$ we have$$\Z_{N-1}\cap\Z_N=(A_1\times\ldots\times A_{N-2}\times
D_{N-1}\times A_N) \cup (A_1\times\ldots A_{N-1}\times
D_N)$$
$$\cup(\w{\x}_{N-2,k}\times A_{N-1}\times A_N).$$ First take
$B_1:=A_1\times\ldots\times A_{N-2}\times A_{N-1}\times D_N$. Since the product
of a not connected set with any set is not connected, connected components of
$B_1$ are products of connected components of $A_j$, $j=1,\ldots,N-1,$ and $D_N$. Since the last set is
connected, every connected component of $B_1$ ''contains'' $D_N$ (in the
sense of last place in the product) thus it contains a part of the center
$A_1\times\ldots\times A_N$.

\indent Case of $B_2:=A_1\times\ldots\times A_{N-2}\times D_{N-1}\times A_N$ is
similar.

\indent Now take $B_3:=\w{\x}_{N-2,k}\times A_{N-1}\times A_N$. As in the
previous cases, since $\w{\x}_{N-2,k}$ is connected, every connected component
of $B_2$ ''contains'' whole $\w{\x}_{N-2,k}$ in the product. Since
$\w{\x}_{N-2,k}$ contains $A_1\times\ldots\times A_{N-2}$, every connected component of $B_2$
must contain part of the center.

\indent {\bf Step 2.} One connected component of $\w{\Z}_{N-1}\cap\w{\Z}_N$
contains whole $\Z_{N-1}\cap\Z_N$.

\indent Intersection $\w{\Z}_{N-1}\cap\w{\Z}_N$ contains cross
$\x_{N,1}$ which is connected and contains the center. Thus the whole center
must lay in one connected component of $\w{\Z}_{N-1}\cap\w{\Z}_N$. Now take any
connented component of $\w{\Z}_{N-1}\cap\w{\Z}_N$ which intersects
$\Z_{N-1}\cap\Z_N$. From Step 1 it must contain part of the center, so there is
only one connected component of $\w{\Z}_{N-1}\cap\w{\Z}_N$ intersecting (thus
containing) $\Z_{N-1}\cap\Z_N$.

\indent {\bf Step 3.} Every connected component of $\w{\Z}_{N-1}\cap\w{\Z}_N$
with $\w{M}_{N-1}\cup\w{M}_N$ deleted is a domain, thus it is a connected component
of $(\w{\Z}_{N-1}\cap\w{\Z}_N)\setminus(\w{M}_{N-1}\cup\w{M}_N)$.

\indent Take any connected component of $\w{\Z}_{N-1}\cap\w{\Z}_N$, name it
$\Omega$. Then $\Omega$ is a domain. The set $\w{M}_{N-1}$ is pluripolar and relatively
closed in $\w{\Z}_{N-1}$, thus it is pluripolar and relatively closed in
$\Omega$, so $\Omega\setminus\w{M}_{N-1}$ is still a domain. Because $\w{M}_N$
is relatively closed and pluripolar in $\w{\Z}_N$, it is relatively closed and
pluripolar in $\Omega\setminus\w{M}_{N-1}$. So
$\Omega\setminus(\w{M}_{N-1}\cup\w{M}_N)$ is a domain.

\indent {\bf Step 4.} One connected component of
$(\w{\Z}_{N-1}\cap\w{\Z}_N)\setminus(\w{M}_{N-1}\cup\w{M}_N)$
contains whole set $(\Z_{N-1}\cap\Z_N)\setminus(\w{M}_{N-1}\cup\w{M}_N)$.

\indent It follows immediately from Step 2 and Step 3.

\indent {\bf Step 5.} $\w{f}_{N-1}=\w{f}_N$ on
$(\Z_{N-1}\cap\Z_N)\setminus(\w{M}_{N-1}\cup\w{M}_N)$.

\indent Let $\Omega$ be a connected component from Step 4. Then both
$\w{f}_{N-1}$ and $\w{f}_N$ are defined on $\Omega$. On the non-pluripolar center we have
$\w{f}_{N-1}=\w{f}_N$. $\Omega$ is a domain and contains the center, so
$\w{f}_{N-1}=\w{f}_N$ on $\Omega$. Moreover, $\Omega$ contains
$(\Z_{N-1}\cap\Z_N)\setminus(\w{M}_{N-1}\cup\w{M}_N)$, what finishes the proof.
\end{proof}

The proof of Theorem \ref{main} is finished.
\end{proof}

\begin{ex}
In the proof of Theorem \ref{main} with $k=1$ we do not need cross $\w{\Z}$ - it
is sufficient to take $\w{\Z}_N$ (see \cite{JarPfl 2010} for details), however
in the case when $k>1$ Step 4 is necessary. Indeed, let $A_1=A_2=A_3=(-1,1)$, $D_1=D_2=D_3=\D$,
$\x_{3,2}:=\X_{3,2}((A_j,D_j)_{j=1}^3)$, $\Z_3:=\X(A_1\times
A_2,A_3;\w{\x}_{2,2},D_3)$. Then $\w{\x}_{2,2}=D_1\times D_2$,
$$\w{\Z}_3:=\{z\in D_1\times D_2\times D_3:\ \h_{A_1\times A_2,D_1\times
D_2}(z_1,z_2)+\h_{A_3,D_3}(z_3)<1\}=$$ $$=\{z\in D_1\times D_2\times D_3:\
\max\{h_{A_j,D_j}(z_j),\ j=1,2\}+\h_{A_3,D_3}(z_3)<1\},$$
and
$\h_{A_j,D_j}(\zeta)=\frac{2}{\pi}\left|\Arg\left(\frac{1+\zeta}{1-\zeta}\right)\right|$,
$\zeta\in\D$, $j=1,2,3$ (see Example 3.2.20 (a) in \cite{JarPfl 2011}).
Take
$z=(0,w,w)$, where $w=\frac{i}{\sqrt{3}}$. Then
$z\in\x_{3,2}$ but $z\not\in\w{\Z}_3$.

\end{ex}

\renewcommand\refname{References}

\end{document}